\documentclass{amsart}


\usepackage{amsfonts,amssymb,amsmath,amsthm}
\usepackage{url}
\usepackage{enumerate}

\usepackage{amsmath,amsfonts,amsthm,url,color,amssymb}
\usepackage{graphicx}


\newcommand{\fqn}{\mathbb{F}_{q^n}}
\newcommand{\F}{\mathbb{F}}

\newcommand{\oord}{\mathrm{Ord}}
\newcommand{\I}{\mathbb I}


\newtheorem{theorem}{Theorem}[section]
\newtheorem{lemma}[theorem]{Lemma}
\newtheorem{proposition}[theorem]{Proposition}
\newtheorem{corollary}[theorem]{Corollary}

\theoremstyle{definition}
\newtheorem{definition}[theorem]{Definition}
\newtheorem{example}[theorem]{Example}

\theoremstyle{remark}
\newtheorem{remark}[theorem]{Remark}

\numberwithin{equation}{section}

\begin{document}
\title{Normal points on Artin-Schreier curves over finite fields}


\author{Giorgos Kapetanakis}
\address{Department of Mathematics, University of Thessaly, 3rd km Old National Road Lamia-Athens, 35100, Lamia, Greece}
\email{kapetanakis@uth.gr}
\author{Lucas Reis}
\address{Departamento de Matem\'{a}tica,
Universidade Federal de Minas Gerais,
UFMG,
Belo Horizonte MG (Brazil),
 31270901}
\email{lucasreismat@mat.ufmg.br}



\date{\today}
\subjclass[2020]{11T30; 11T06; 11T23}
\keywords{finite fields; character sums; normal elements; free elements; Artin-Schreier curves}
\begin{abstract}
In 2022, S.D. Cohen and the two authors introduced and studied the concept of $(r, n)$-freeness on finite cyclic groups $G$ for suitable integers $r, n$, which is an arithmetic way of capturing elements of special forms that lie in the subgroups of $G$. Combining this machinery with some character sum techniques, they explored the existence of points $(x_0, y_0)$ on affine curves $y^n=f(x)$ defined over a finite field $\F$ whose coordinates are generators of the multiplicative cyclic group $\F^*$. 
In this paper we develop the natural additive counterpart of this work for finite fields. Namely, any finite extension $\mathbb E$ of a finite field $\F$ with $Q$ elements is a cyclic $\F[x]$-module induced by the Frobenius automorphism $\alpha\mapsto \alpha^{Q}$, and any generator of this module is said to be a normal element over $\F$. We introduce and study the concept of $(f, g)$-freeness on this module structure for suitable polynomials $f, g\in \F[x]$. As a main application of the machinery developed in this paper, we study the existence of $\F_{p^n}$-rational points in the Artin-Schreier curve $\mathfrak A_f : y^p-y=f(x)$ whose coordinates are normal over the prime field $\F_p$ and establish concrete results.
%
%
\end{abstract}

%
%
%
%

\maketitle

\section{Introduction}\label{sec:intro}
Let $q$ be a power of the prime $p$ and, for each positive integer $n$, let $\F_{q^n}$ be the finite field with $q^n$ elements. The field $\F_{q^n}$ has interesting structures related to the two basic field operations. Namely, the multiplicative group $\F_{q^n}^*=\F_{q^n}\setminus \{0\}$ is cyclic and any generator of such group is a {\em primitive element}. On the other hand, regarding the additive structure of $\F_{q^n}$, we can view $\F_{q^n}$ as an $\F_q[x]$-module induced by the Frobenius map $\alpha\mapsto \alpha^q$, namely
\[ \sum_{i=0}^{m}a_ix^i\circ \alpha:=\sum_{i=0}^ma_i\alpha^{q^i}. \]
It turns out that in this setting $\F_{q^n}$ is also cyclic and, regarding $\F_{q^n}$ as an $\F_q$-vector space, this means that there exists an $\F_q$-basis of the form $\{\beta, \beta^q, \ldots, \beta^{q^{n-1}}\}$. In this case, any such $\beta\in \F_q$ is a {\em normal element} over  $\F_q$. Both primitive and normal elements were extensively studied in the past decades, mainly motivated by theoretical problems, but also practical issues where such elements are employed. For instance, primitive elements are used in cryptographic applications such as the discrete logarithm problem (most notably, the Diffie-Hellman key exchange~\cite{dh}). Moreover, normal elements can be useful in generic situations where finite field arithmetic is performed; see~\cite{GAO} for an overview on normal elements, and their theoretical and practical aspects.

In 1987, Lenstra and Schoof~\cite{lenstraschoof87} proved that, for every prime power $q$ and every positive integer $n\ge 2$, there exists an element $\alpha\in \F_{q^n}$ that is simultaneously primitive and normal over $\F_q$. This is widely known as the {\em Primitive Normal Basis Theorem} (PNBT). A crucial tool in the proof of the PNBT is to provide expressions for the characteristic functions of normal and primitive elements in finite fields by means of additive and multiplicative characters, respectively. The latter is obtained through the concept of {\em freeness}, which is a convenient way to capture elements that can be written in special forms when one considers the aforementioned cyclic group and $\F_q[x]$-module structure of finite fields. For more details, see Definition 5.1 in~\cite{HMPT}. 
In the original proof by Lenstra and Schoof~\cite{lenstraschoof87}, after some algebraic computations and estimates on certain character sums, the PNBT is proved up to a finite number of pairs $(q, n)$. For these remaining pairs, the theorem is verified by direct computer search. A computer-free proof of the PNBT was later given by Cohen and Huczynska in 2003 in \cite{cohenhuczynska03}. 

In the past decade, questions related to the PNBT have been proposed and many results have been established. In this context, a recurrent object of study is the existence of pairs of elements $(\alpha, f(\alpha))\in \F_q\times \F_q, f\in \F_q(x)$ with special properties related to the multiplicative and additive structure of $\F_q$ (e.g., prescribed multiplicative order or $k$-normality\footnote{For the formal definition and basic properties of $k$-normal elements, see \cite{HMPT}.}).
Although the number of works in this line of research is extensive, we refer the interested reader, for example, to \cite{BCLT} or \cite{kapetanakisreis18} and the references therein.
Such pairs can be viewed as points $(x_0, y_0)$ on affine curves $y=f(x)$ whose coordinates $x_0, y_0$ have the aforementioned properties.  Motivated by the latter, in~\cite{cohenkapetanakisreis22} the authors explore the existence of points $(x_0, y_0)\in \F_q\times \F_q$ on affine curves $y^n=f(x)$ such that $x_0$ and $y_0$ are both primitive elements of $\F_q$. In order to obtain existence results on the latter they generalized the concept of freeness for generic multiplicative finite cyclic groups, culminating in a character sum expression for the set of elements in $\F_{q^n}^*$ with prescribed multiplicative order. 

In this paper paper we develop the natural additive counterpart of the concepts and results that are provided in \cite{cohenkapetanakisreis22}. Towards this end, we first present some background material in Section~\ref{sec:preparation}, then we introduce and study $(f,g)$-freeness for the additive structure of $\F_{q^n}$ in Sections~\ref{sec:freeness} and \ref{sec:freepolval}. Finally, in Section~\ref{sec:ascurves} we confine ourselves to extensions over the prime field $\F_p$ and employ the developed theory, in order to study the existence of $\F_q$-rational points in the Artin-Schreier curve $\mathfrak A_f : y^p-y=f(x)$ with $\F_p$-normal coordinates. In particular, our main result is the following theorem.

\begin{theorem}\label{thm:as}
Let $q=p^n$ be a prime power, where $n\ge 5$ and let $f\in \F_q[x]$ be a polynomial that is not of the form $ax^p+bx+c$ with $a, b, c\in \F_q$ satisfying $1<\deg(f)\le p+1$. Then there exists a point $(x_0, y_0)\in \F_q\times \F_q$ in the Artin-Schreier curve $\mathfrak A_f: y^p-y=f(x)$ such that both $x_0, y_0$ are normal over $\F_p$, provided that 
$(n,p) \neq (5,2), (5,5), (6,2), (6,3)$ or $(6,7)$.
\end{theorem}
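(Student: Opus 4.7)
The plan is to adapt the character-sum method from \cite{cohenkapetanakisreis22} to the additive setting, using the $(f,g)$-freeness machinery developed in Sections~\ref{sec:freeness}--\ref{sec:freepolval}. An element $\alpha \in \F_q$ (with $q=p^n$) is normal over $\F_p$ exactly when its $\F_p[x]$-module order is $x^n-1$. Writing the characteristic function of this set via the standard M\"obius-type inversion over divisors of $x^n-1$ produces an expression
\[
\Omega(\alpha) \;=\; \Theta \sum_{d \mid x^n-1} \frac{\mu_p(d)}{\phi_p(d)} \sum_{\oord(\psi)=d} \psi(\alpha),
\]
where $\Theta = \phi_p(x^n-1)/p^n$, the inner sum runs over additive characters of $\F_q$ of $\F_p[x]$-order exactly $d$, and $\mu_p,\phi_p$ are the natural polynomial analogues of the M\"obius and Euler functions.

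Let $N_f$ denote the number of points $(x_0,y_0) \in \mathfrak{A}_f(\F_q)$ with $x_0,y_0$ both normal over $\F_p$. Substituting the above expression gives
\[
N_f \;=\; \Theta^2 \sum_{d_1,d_2 \mid x^n-1} \frac{\mu_p(d_1)\mu_p(d_2)}{\phi_p(d_1)\phi_p(d_2)} \sum_{\oord(\psi_i)=d_i} T_f(\psi_1,\psi_2),
\]
with $T_f(\psi_1,\psi_2) = \sum_{(x_0,y_0)\in\mathfrak{A}_f(\F_q)} \psi_1(x_0)\psi_2(y_0)$. The pair $(d_1,d_2)=(1,1)$ contributes the main term $\Theta^2 |\mathfrak{A}_f(\F_q)|$, and the Hasse--Weil estimate gives $|\mathfrak{A}_f(\F_q)| = q + O(\sqrt q)$. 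For the remaining pairs one bounds $T_f$ via the Weil bound for character sums on curves: the hypotheses $1<\deg f \le p+1$ and $f \neq ax^p+bx+c$ guarantee that the cover of $\mathbb{A}^1$ attached to $T_f$ is geometrically non-trivial of controlled genus, yielding $|T_f(\psi_1,\psi_2)| \le C_f \sqrt q$ with $C_f$ depending only on $p$ and $\deg f$.

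Assembling everything gives a sufficient condition of the shape $\sqrt q > C_f \cdot W(x^n-1)^2$ for $N_f>0$, where $W(x^n-1)$ denotes the number of squarefree divisors of $x^n-1$ in $\F_p[x]$. The naive bound in this form is too weak for small parameters, so the decisive step is to apply the sieve variant of the $(f,g)$-freeness criterion (the additive analogue of the Cohen sieve) to replace $W(x^n-1)^2$ by a far smaller combinatorial quantity indexed by a short list of ``sieving factors''; this is precisely the purpose of the apparatus set up in Section~\ref{sec:freepolval}. The resulting numerical inequality is then checked for every pair $(n,p)$ with $n \ge 5$ outside the five listed exceptions, where the bound genuinely fails and the theorem therefore excludes them.

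I expect the main obstacle to lie not in the generic sieve step, whose shape is standard, but in securing a \emph{uniform} Weil-type estimate for $T_f(\psi_1,\psi_2)$ across all non-trivial character pairs. One must carefully treat the degenerate sub-cases in which exactly one of $\psi_1,\psi_2$ is trivial, or in which the restriction $\psi_2(y)$ to $\mathfrak{A}_f$ descends via an Artin--Schreier relation to a function in $x$ alone; the hypothesis that $f$ is not of the form $ax^p+bx+c$ is exactly what rules out the worst of these configurations and keeps the genus of the auxiliary cover bounded by $(p-1)(\deg f -1)/2$, so that the stated cutoff $n\ge 5$ is the correct threshold up to the five tabulated exceptions.
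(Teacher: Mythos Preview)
Your plan is workable in principle but takes a genuinely different route from the paper, and in doing so it bypasses the very device the paper builds the machinery to exploit.

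The paper does \emph{not} sum over points $(x_0,y_0)$ on $\mathfrak A_f$. Instead, the key step is Lemma~\ref{lem:connect}: a normal point exists on $\mathfrak A_f$ if and only if there is a $z\in\F_q$ that is $(x^n-1,1)$-free while $f(z)$ is $\bigl(\tfrac{x^n-1}{x-1},\,x-1\bigr)$-free. The second condition uses $(F,G)$-freeness with the nontrivial $G=x-1$; this is precisely where the new $(f,g)$-freeness notion earns its keep. After this reduction one is counting a \emph{single} variable $z$ ranging over $\F_q$, and the pair $(h,H)=(x,f(x))$ is fed into Theorem~\ref{thm:main} and Corollary~\ref{cor:main}. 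The character sums are then ordinary Weil sums of the polynomial $ax+bf(x)$ over $\F_q$, bounded by Theorem~\ref{thm:weil} with constant $\deg f-1\le p$, and the nonsingularity hypothesis is exactly that $f\ne ax^p+bx+c$. The resulting inequality is $p^{n/2-2}\ge W(x^n-1)W\!\left(\tfrac{x^n-1}{x-1}\right)$, followed by explicit bounds on $W(x^n-1)$, a finite table of candidate exceptions, and finally the sieve of Corollary~\ref{cor:sieve} to whittle these down to the five listed pairs.

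Your approach, by contrast, uses only the classical $g=1$ case of freeness for \emph{both} coordinates and pushes all the work into bounding $T_f(\psi_1,\psi_2)$ on the curve. That is feasible, and after unwinding (splitting over $y_0$ for fixed $x_0$, or invoking Bombieri--Weil on $\mathfrak A_f$) one recovers constants of the same order, so the numerics should come out comparably. But it costs you: you must handle the degenerate character configurations you yourself flag, you need curve-level Weil estimates rather than the elementary Theorem~\ref{thm:weil}, and most importantly you never actually use $(f,g)$-freeness for $g\ne 1$, so the claim that ``this is precisely the purpose of the apparatus set up in Section~\ref{sec:freepolval}'' is off the mark. The paper's route is both technically lighter and the intended illustration of the new framework.
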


\begin{remark} \label{remark1}
Theorem~\ref{thm:as} entails that the pairs $(n, p)=(5,2), (5,5), (6,2), (6,3)$ and $(6,7)$ are \emph{possible} exceptions, not necessarily genuine exceptions. However, a computer check reveals that the cases $(5, 2)$ and $(6, 2)$ are indeed genuine exceptions to Theorem~\ref{thm:as}: note that in this case we necessarily have $\deg(f)=3$. For the pair $(5, 2)$  we get  exactly $4$ exceptions while for the pair $(6, 2)$ we get thousands of exceptions. For the remaining pairs, the number of possible $f$'s is extremely large (about $p^{n(p+1)}$) and we were not able to check them fully, so we get no conclusion for them.
\end{remark}

\begin{remark}
In this work, we employ the so-called \emph{prime sieve}, see Theorem~\ref{thm:sieve}. In \cite{bailey19}, an improvement of this technique, called the \emph{modified prime sieve}, was introduced. However, the three \emph{possible} exceptions, $(n,p) = (5,5), (6,3)$ and $(6,7)$, see Theorem~\ref{thm:as} and Remark~\ref{remark1}, fail to pass the resulting condition even if the modified prime sieve is employed.
\end{remark}

\section{Preparation} \label{sec:preparation}

This section provides background material that will be used along the way. Throughout this paper, $q$ is a prime power and $\F_q$ is the finite field with $q$ elements, $\F_{q^n}$ is its extension of degree $n$ and $\overline\F_q$ is its algebraic closure.

\subsection{Some arithmetic functions over $\F_q[x]$}
We present some arithmetic functions defined over polynomials that are further used.

\begin{definition}
Let $f, g\in \F_q[x]$ be nonzero polynomials. 
\begin{enumerate}[(i)]
\item We set $f_{(g)}=\frac{f}{\gcd(f, g)}$.
\item $\Phi_q(f)$ denotes the Euler totient function for polynomials, i.e., $\Phi_q(f)=\# \left(\frac{\F_q[x]}{f\cdot \F_q[x]}\right)^{\times}$ is the number of invertible cosets modulo $f(x)$.
\item $W(f)$ stands for the number of monic squarefree divisors of $f$ in $\F_q[x]$.
\item $\mu_q(f)$ denotes the M\"obius function for polynomials over $\F_q$. More precisely, $\mu_q(f)=0$ if $f$ is not squarefree and $\mu_q(f)=(-1)^r$ if $f$ has $r\ge 0$ distinct irreducible monic divisors, defined over $\F_q$.
\item $|f| = \# \left(\frac{\F_q[x]}{f\cdot \F_q[x]}\right) = q^{\deg f}$ is the number of cosets of $\F_q[x]$ modulo $f(x)$.
\end{enumerate}
\end{definition}

It is well-known that if $\varphi=\Phi_q, W$ or $\mu_q$, then $\varphi(F\cdot G)=\varphi(F)\cdot \varphi(G)$ for all relatively prime polynomials $F, G\in \F_q[x]$, i.e., these functions are multiplicative.  We will need the following result.

\begin{lemma}\label{lem:arithfun}
For nonzero polynomials $f, g\in \F_q[x]$, we have that
\[ T(f, g):=\sum_{t\mid f}\frac{|\mu_q(t_{(g)}|}{\Phi_q(t_{(g)})}\cdot \Phi_q(t)= |\gcd(f, g)|\cdot W(\gcd(f, f_{(g)})). \]
\end{lemma}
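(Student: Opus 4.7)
The plan is to prove Lemma 2.2 by reducing to the case of a prime power $f = P^k$, exploiting multiplicativity of both sides as functions of $f$ (for fixed $g$).

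First I would verify that both sides of the identity
\[ T(f,g) = |\gcd(f,g)|\cdot W(\gcd(f,f_{(g)})) \]
are multiplicative in $f$. If $f = f_1 f_2$ with $\gcd(f_1,f_2)=1$, every divisor $t$ of $f$ factors uniquely as $t = t_1 t_2$ with $t_i\mid f_i$ and $\gcd(t_1,t_2)=1$. Coprimality is preserved under quotienting by $\gcd(\cdot,g)$, so $t_{(g)} = (t_1)_{(g)}(t_2)_{(g)}$ with the factors coprime. Applying multiplicativity of $\mu_q$, $\Phi_q$ and $|\cdot|$ term by term, the summand in $T(f,g)$ splits as a product, and $T(f_1 f_2, g) = T(f_1,g) T(f_2,g)$. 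The right-hand side factors similarly because $\gcd(f_1 f_2, g) = \gcd(f_1,g)\gcd(f_2,g)$, $f_{(g)}$ splits as $(f_1)_{(g)}(f_2)_{(g)}$, and $W$ is multiplicative.

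It then suffices to check the identity when $f = P^k$ for a monic irreducible $P \in \F_q[x]$. Let $m = v_P(g)$ be the $P$-adic valuation of $g$. For $t = P^i$ with $0 \le i \le k$ one has $\gcd(P^i, g) = P^{\min(i,m)}$, whence $t_{(g)} = P^{\max(0,i-m)}$. The $\mu_q$-factor kills all terms with $i \ge m+2$. For $i \le m$ the summand reduces to $\Phi_q(P^i)$, and a telescoping computation gives $\sum_{i=0}^{j} \Phi_q(P^i) = |P|^j$. The boundary term $i = m+1$ (if $k \ge m+1$) equals $\frac{1}{|P|-1}\cdot |P|^m(|P|-1) = |P|^m$. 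Thus
\[ T(P^k,g) = \begin{cases} |P|^k & \text{if } k \le m, \\ 2\,|P|^m & \text{if } k \ge m+1. \end{cases} \]
On the right-hand side, $|\gcd(P^k,g)| = |P|^{\min(k,m)}$ and $f_{(g)} = P^{\max(0,k-m)}$, so $\gcd(f,f_{(g)}) = P^{\max(0,k-m)}$ and $W$ of this is $1$ when $k \le m$ and $2$ when $k > m$. Multiplying gives exactly the same two cases, matching $T(P^k,g)$.

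I expect the multiplicativity step to be essentially bookkeeping, and the arithmetic obstacle to lie in the prime-power computation, particularly the observation that the contribution from $i = m+1$ precisely matches the telescoped sum $|P|^m$, yielding the factor $2$ that corresponds to $W(P^{k-m}) = 2$. Once this matching is in place, multiplicativity delivers the general identity.
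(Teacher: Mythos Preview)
Your proposal is correct and follows essentially the same approach as the paper: both reduce to the prime-power case $f=P^k$ via multiplicativity in $f$, then split into the cases $k\le m$ and $k>m$ (where $m=v_P(g)$) and verify the identity by direct computation. Your telescoping argument and boundary-term analysis at $i=m+1$ match the paper's calculation exactly, with only notational differences.
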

\begin{proof}
We observe that the functions $F_g(f) := \frac{|\mu_q(f_{(g)}|}{\Phi_q(f_{(g)})}\cdot \Phi_q(f)$ and $G_g(f) := |\gcd(f, g)| \cdot W(\gcd(f, f_{(g)}))$ are both multiplicative on $f$. In particular the multiplicativity of $F_g(f)$ implies the multiplicativity of $T(f,g)$ on $f$. 
Thus, it suffices to consider the case when $f$ is a power of an irreducible polynomial. So, we assume that $f=h^\kappa$, where $h\in\F_q[x]$ is irreducible and $\kappa\geq 0$. Further, write $g=h^\lambda r$, where $\lambda\geq 0$ and $\gcd(r,h)=1$.

First, assume that $\kappa>\lambda$. Then
\begin{align*}
T(f,g) & = \sum_{t\mid f}\frac{|\mu_q(t_{(g)}|}{\Phi_q(t_{(g)})} \cdot \Phi_q(t) \\
 & = \sum_{i=0}^{\lambda} \frac{|\mu_q(1)|}{\Phi_q(1)} \cdot \Phi_q(h^i) + \sum_{i=\lambda+1}^\kappa \frac{|\mu_q(h^{i-\lambda})|}{\Phi_q(h^{i-\lambda})}  \cdot \Phi_q(h^i) \\
 & = |h^\lambda| + \frac{\Phi_q(h^{\lambda+1})}{\Phi_q(h)} = 2\cdot |h^\lambda| .
\end{align*}
Also, $\gcd(f,g) =h^\lambda$ and $\gcd(f, f_{(g)} ) = h^{\kappa-\lambda}$, hence $G_g(f) = 2\cdot |h^\lambda| = T (f,g)$.

Next, assume that $\kappa\leq\lambda$. In this case,
\[ T(f,g) = \sum_{t\mid f}\frac{|\mu_q(t_{(g)}|}{\Phi_q(t_{(g)})} \cdot \Phi_q(t) 
  = \sum_{i=0}^{\kappa} \frac{|\mu_q(1)|}{\Phi_q(1)} \cdot \Phi_q(h^i) = |h^\kappa| , \]
while, $\gcd(f,g) =h^\kappa$ and $\gcd(f, f_{(g)} ) = 1$, hence $G_g(f) = |h^\kappa| = T (f,g)$. The desired result follows.
\end{proof}

\subsection{The $\F_q$-order of an element in $\overline{\F}_q$}
We start with the following definition.
\begin{definition}\label{def:circ}
For a polynomial $f\in \F_q[x]$ and $\alpha\in \overline{\F}_q$ with $f(x)=\sum_{i=0}^ma_ix^i$, we set $f\circ \alpha=\sum_{i=0}^ma_i\alpha^{q^i}$.

\end{definition}

The existence of normal elements is known for any extension $\F_{q^n}$ of $\F_q$, see Theorem~2.35 of \cite{lidlniederreiter97}. 
The following well-known technical results are presented proofless. For more details, see Section~3.4 of \cite{lidlniederreiter97}.

\begin{lemma}\label{lem:q-associate}
For any $f, g\in \F_q[x]$ and any $\alpha\in \overline{\F}_q$, we have that $(f+g)\circ \alpha=f\circ \alpha+g\circ \alpha$ and $(f\cdot g)\circ \alpha=f\circ(g\circ \alpha)$.
\end{lemma}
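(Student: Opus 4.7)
The plan is to establish both identities directly from the definition $f \circ \alpha = \sum_i a_i \alpha^{q^i}$, using only two standard facts: that the Frobenius $\beta \mapsto \beta^q$ is additive in characteristic $p$, and that elements of $\F_q$ are fixed by it.

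For the additivity assertion, I would write $f = \sum_i a_i x^i$ and $g = \sum_i b_i x^i$ over a common index set (padding with zero coefficients where necessary), so that $f + g = \sum_i (a_i + b_i)x^i$. Unwinding the definition then gives $(f+g)\circ \alpha = \sum_i (a_i + b_i)\alpha^{q^i} = f\circ \alpha + g\circ \alpha$, and nothing further is required.

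For the multiplicativity assertion, with $f = \sum_i a_i x^i$ and $g = \sum_j b_j x^j$, the product is $fg = \sum_k \bigl(\sum_{i+j=k} a_i b_j\bigr)x^k$, so by the definition
\[ (fg)\circ \alpha = \sum_k \Bigl(\sum_{i+j=k} a_i b_j\Bigr)\alpha^{q^k}. \]
On the other side,
\[ f\circ(g\circ \alpha) = \sum_i a_i\,\Bigl(\sum_j b_j\alpha^{q^j}\Bigr)^{q^i}. \]
The key step is to expand the inner $q^i$-th power: since we are in characteristic $p$ the Frobenius is additive and the power of a sum is the sum of the powers, and since $b_j \in \F_q$ we have $b_j^{q^i} = b_j$. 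Hence the inner expression equals $\sum_j b_j\alpha^{q^{i+j}}$, and regrouping the resulting double sum by $k = i+j$ recovers $(fg)\circ \alpha$.

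There is no real obstacle to speak of; the only thing to watch is the interchange of the outer exponent $q^i$ with the inner sum, and this is legitimate precisely because we are in characteristic $p$ and the coefficients of $g$ lie in $\F_q$. Conceptually, the lemma is the statement that $f \mapsto (\alpha \mapsto f\circ \alpha)$ is a ring homomorphism from $\F_q[x]$ into the ring of $\F_q$-linear endomorphisms of $\overline{\F}_q$, with $x$ acting as the Frobenius.
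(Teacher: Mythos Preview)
Your argument is correct and is exactly the standard verification one would expect. The paper itself does not supply a proof of this lemma: it explicitly states that the result is well-known and is ``presented proofless,'' referring instead to Section~3.4 of Lidl--Niederreiter, so there is no in-paper argument to compare against.
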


\begin{lemma}\label{lem:generator}
Fix $n$ a positive integer and $\beta\in \F_{q^n}$ a normal element. Then any $\alpha\in \F_{q^n}$ is written uniquely as $f\circ \beta$ for some $f\in \F_q[x]$ of degree at most $n-1$.
\end{lemma}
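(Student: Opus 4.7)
The plan is to unpack the definition of a normal element and recognize the coordinate expansion in the corresponding $\F_q$-basis as a $\circ$-action by a polynomial of degree less than $n$. First I would note that, by the very definition recalled in Section~\ref{sec:intro}, saying that $\beta\in\F_{q^n}$ is normal over $\F_q$ means precisely that the $n$-element set $\{\beta,\beta^q,\ldots,\beta^{q^{n-1}}\}$ is an $\F_q$-basis of the $n$-dimensional $\F_q$-vector space $\F_{q^n}$.

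With this in hand, both existence and uniqueness of $f$ reduce to the existence and uniqueness of a coordinate vector in this basis. For existence, given $\alpha\in\F_{q^n}$, there are scalars $a_0,\ldots,a_{n-1}\in\F_q$ with $\alpha=\sum_{i=0}^{n-1}a_i\beta^{q^i}$; setting $f(x)=\sum_{i=0}^{n-1}a_ix^i\in\F_q[x]$, Definition~\ref{def:circ} immediately yields $\alpha=f\circ\beta$ with $\deg f\le n-1$. For uniqueness, if $f,g\in\F_q[x]$ have degree at most $n-1$ and satisfy $f\circ\beta=g\circ\beta$, then Lemma~\ref{lem:q-associate} gives $(f-g)\circ\beta=0$; writing $f-g=\sum_{i=0}^{n-1}c_ix^i$, this reads $\sum_{i=0}^{n-1}c_i\beta^{q^i}=0$, and $\F_q$-linear independence of the basis forces every $c_i$ to vanish, hence $f=g$.

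There is no real obstacle here: the lemma is essentially a dictionary between $\F_q$-linear combinations of $\{\beta^{q^i}\}_{i=0}^{n-1}$ and the polynomial $\circ$-action introduced in Definition~\ref{def:circ}, and the bound $\deg f\le n-1$ is automatic because the basis has exactly $n$ elements. No character-theoretic or counting input is required, and the only prerequisite beyond the definition is the trivial additivity statement already recorded in Lemma~\ref{lem:q-associate}.
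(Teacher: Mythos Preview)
Your argument is correct. The paper does not supply a proof of this lemma at all---it explicitly states that Lemmas~\ref{lem:q-associate} and~\ref{lem:generator} ``are presented proofless'' and refers to Section~3.4 of~\cite{lidlniederreiter97}. Your write-up is exactly the standard verification one would expect: the normal basis gives unique coordinates, and the $\circ$-action of a polynomial of degree at most $n-1$ is nothing but the corresponding $\F_q$-linear combination of $\beta,\beta^q,\ldots,\beta^{q^{n-1}}$. There is nothing to add.
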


If $\alpha\in \overline{\F}_{q}$, notice that $(x^n-1)\circ \alpha=0$ if and only if $\alpha^{q^n}-\alpha=0$, i.e., $\alpha\in \F_{q^n}$. In particular, from Lemma~\ref{lem:q-associate}, the set $\mathcal I_{\alpha}:=\{h\in \F_q[x]\,|\,h\circ \alpha=0\}$ is a nonzero ideal of $\F_q[x]$, hence is generated  by a unique monic polynomial in $\F_q[x]$. This polynomial,  denoted by $\oord(\alpha)$, is called the \emph{$\F_q$-order} of $\alpha$.

The following lemma relates the $\F_q$-order of $\alpha=h\circ \beta$ with the $\F_q$-order of $\beta$ in a natural way. For its proof, see Lemma 2.6 in~\cite{R20}.

\begin{lemma}\label{lem:gcd}
Let $\beta\in \overline{\F}_q$ and fix $g\in \F_q[x]$. If $\alpha=g\circ \beta$, then
\[ \oord(\alpha)=\frac{\oord(\beta)}{\gcd(\oord(\beta), g(x))} = \oord(\beta)_{(g)}. \]
\end{lemma}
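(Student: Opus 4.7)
The plan is to directly unpack the definition of $\F_q$-order using the compatibility of the $\circ$-operation with the ring structure of $\F_q[x]$ (Lemma~\ref{lem:q-associate}). Set $F:=\oord(\beta)$ and $d:=\gcd(F,g)$, and write $F = d\cdot F'$ and $g = d\cdot g'$, where by construction $F' = F_{(g)}$ and $\gcd(F',g')=1$. The goal is then to show that $\oord(\alpha) = F'$.

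First I would verify that $F'$ annihilates $\alpha$ under $\circ$. Using Lemma~\ref{lem:q-associate},
\[
F'\circ \alpha = F'\circ(g\circ \beta) = (F'\cdot g)\circ \beta = (F'\cdot d\cdot g')\circ \beta = (F\cdot g')\circ \beta = g'\circ(F\circ \beta) = g'\circ 0 = 0.
\]
Since $\oord(\alpha)$ generates the ideal of polynomials annihilating $\alpha$, this already yields $\oord(\alpha)\mid F'$.

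For the reverse divisibility, take any $h\in \F_q[x]$ with $h\circ \alpha = 0$; I want to show $F'\mid h$. Applying Lemma~\ref{lem:q-associate} again,
\[
0 = h\circ \alpha = h\circ(g\circ \beta) = (hg)\circ \beta,
\]
so $F \mid hg$ by definition of $\oord(\beta)=F$. Substituting $F = dF'$ and $g = dg'$ gives $dF' \mid h\cdot dg'$, i.e.\ $F' \mid hg'$. Since $\gcd(F',g')=1$ by construction, Gauss' lemma in $\F_q[x]$ gives $F'\mid h$. Applying this to $h = \oord(\alpha)$ yields $F' \mid \oord(\alpha)$, and combined with the divisibility from the previous paragraph (and the fact both are monic), we conclude $\oord(\alpha) = F' = F/\gcd(F,g) = \oord(\beta)_{(g)}$.

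No part of this looks like an obstacle; the only thing to be careful about is bookkeeping with the factorization $F = dF'$, $g = dg'$ to make sure the coprimality $\gcd(F',g')=1$ is used at exactly the right moment in the divisibility argument.
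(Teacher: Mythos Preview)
Your argument is correct. The paper does not supply its own proof of this lemma; it simply cites Lemma~2.6 of~\cite{R20}. Your self-contained verification via Lemma~\ref{lem:q-associate} and the coprimality $\gcd(F',g')=1$ is exactly the standard route and fills that gap cleanly. One cosmetic remark: the step ``$F'\mid hg'$ and $\gcd(F',g')=1$ imply $F'\mid h$'' is usually attributed to Euclid's lemma (or the PID property of $\F_q[x]$) rather than Gauss' lemma, but the mathematics is of course the same.
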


It follows by the definition of $\oord(\alpha)$ that $\alpha\in \F_{q^n}$ if and only if $\oord(\alpha)\mid x^n-1$. From this observation and Lemmas~\ref{lem:generator} and~\ref{lem:gcd}, we readily obtain the following corollaries.

\begin{corollary}
An element $\beta\in \F_{q^n}$ is normal over $\F_q$ if and only if $\oord(\beta)=x^n-1$.
\end{corollary}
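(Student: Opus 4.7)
The plan is to translate the normality of $\beta$ into a statement about the ideal $\mathcal I_{\beta}$ and then read off the $\F_q$-order.

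First I would recall that, by definition, $\beta\in \F_{q^n}$ is normal over $\F_q$ if and only if the $n$-element set $\{\beta, \beta^q,\ldots,\beta^{q^{n-1}}\}$ is an $\F_q$-basis of $\F_{q^n}$; since $[\F_{q^n}:\F_q]=n$, this is equivalent to these $n$ elements being $\F_q$-linearly independent. A nontrivial linear relation
\[
\sum_{i=0}^{n-1} a_i\,\beta^{q^i}=0,\qquad a_i\in\F_q,
\]
is exactly the condition $h\circ\beta=0$ for the nonzero polynomial $h(x)=\sum_{i=0}^{n-1}a_ix^i\in\F_q[x]$ of degree at most $n-1$. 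Therefore $\beta$ is normal over $\F_q$ if and only if $\mathcal I_{\beta}$ contains no nonzero polynomial of degree less than $n$.

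Next I would use the observation already made in the text that $\beta\in \F_{q^n}$ gives $(x^n-1)\circ\beta=0$, so $x^n-1\in\mathcal I_{\beta}$, whence $\oord(\beta)$ divides $x^n-1$ and in particular $\deg\oord(\beta)\le n$. Since $\mathcal I_{\beta}=\oord(\beta)\cdot\F_q[x]$, the ideal $\mathcal I_{\beta}$ contains a nonzero polynomial of degree less than $n$ if and only if $\deg\oord(\beta)<n$. Combining this with the previous paragraph, $\beta$ is normal over $\F_q$ if and only if $\deg\oord(\beta)=n$, and together with the divisibility $\oord(\beta)\mid x^n-1$ this forces $\oord(\beta)=x^n-1$, which is the desired equivalence.

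This is essentially a bookkeeping argument, so I do not anticipate any real obstacle; the only point that needs care is to make sure the correspondence between $\F_q$-linear dependencies among the Frobenius conjugates of $\beta$ and elements of $\mathcal I_{\beta}$ of degree $<n$ is stated as a bijection (both directions), which is immediate from Definition~\ref{def:circ}.
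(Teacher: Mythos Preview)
Your argument is correct and is precisely the direct unpacking that the paper intends: the corollary is stated there without proof, as an immediate consequence of the definition of $\oord(\beta)$ together with the observation $\oord(\beta)\mid x^n-1$, and your translation of a nontrivial $\F_q$-linear relation among $\beta,\beta^q,\ldots,\beta^{q^{n-1}}$ into a nonzero element of $\mathcal I_{\beta}$ of degree $<n$ is exactly that. The references to Lemmas~\ref{lem:generator} and~\ref{lem:gcd} in the paper are needed chiefly for the companion corollary (Corollary~\ref{cor:ord}), not for this one.
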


\begin{corollary}\label{cor:ord}
Let $n$ be a positive integer and $\alpha, \beta \in \fqn$, where $\beta$ is normal over $\F_q$. For a monic divisor $f\in \F_q[x]$ of $x^n-1$, we have that $\oord(\alpha)=f$ if and only if $\alpha=h\circ \beta$, where $h=\frac{x^n-1}{f}\cdot g$ and $g\in \F_q[x]$ is of degree smaller than $\deg(f)$ and $\gcd\left(g, f\right)=1$. In particular, for each monic divisor $f\in \F_q[x]$ of $x^n-1$, there exist $\Phi_q(f)$ elements in $\F_{q^n}$ with $\F_q$-order $f$.
\end{corollary}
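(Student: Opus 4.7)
The plan is to reduce the corollary to a direct application of Lemmas~\ref{lem:generator} and~\ref{lem:gcd} together with the immediately preceding corollary, which tells us that $\oord(\beta)=x^n-1$. So first I would use Lemma~\ref{lem:generator} to write any $\alpha\in\F_{q^n}$ uniquely as $\alpha=h\circ\beta$ for some $h\in\F_q[x]$ with $\deg h<n$, and then plug this into Lemma~\ref{lem:gcd} to obtain
\[ \oord(\alpha)=\frac{x^n-1}{\gcd(x^n-1,h)}. \]
This turns the identity $\oord(\alpha)=f$ into the divisibility identity $\gcd(x^n-1,h)=\frac{x^n-1}{f}$.

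Next, setting $d:=\frac{x^n-1}{f}$, the equation $\gcd(x^n-1,h)=d$ forces $d\mid h$, so I can legitimately write $h=d\cdot g$ for some $g\in\F_q[x]$. The degree constraint $\deg h<n=\deg d+\deg f$ immediately yields $\deg g<\deg f$. I would then verify the elementary identity $\gcd(fd,dg)=d\cdot\gcd(f,g)$ (one direction: $d\mid\gcd(fd,dg)$ trivially; for the other, any common divisor $de$ of $fd$ and $dg$ satisfies $e\mid f$ and $e\mid g$). Together with $x^n-1=fd$, this shows that $\gcd(x^n-1,h)=d$ holds precisely when $\gcd(f,g)=1$. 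This establishes the claimed bijection between elements $\alpha\in\F_{q^n}$ of $\F_q$-order $f$ and polynomials $g\in\F_q[x]$ with $\deg g<\deg f$ and $\gcd(g,f)=1$.

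Finally, the count is immediate: by definition of $\Phi_q$, the number of residues $g\bmod f$ coprime to $f$ is exactly $\Phi_q(f)$, so there are $\Phi_q(f)$ admissible $g$'s, hence $\Phi_q(f)$ elements of $\F_q$-order $f$. I do not anticipate a genuine obstacle here; the only point that requires care is the short gcd identity $\gcd(fd,dg)=d\gcd(f,g)$, but that is a one-line verification once one unwinds the definitions.
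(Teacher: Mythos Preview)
Your proposal is correct and follows exactly the route the paper intends: the paper presents this corollary without proof, stating only that it is ``readily obtained'' from Lemmas~\ref{lem:generator} and~\ref{lem:gcd} together with the characterization $\oord(\beta)=x^n-1$ for normal $\beta$, and your argument simply unpacks that deduction. The only step requiring any work---the identity $\gcd(fd,dg)=d\cdot\gcd(f,g)$ in the PID $\F_q[x]$---is handled correctly.
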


\subsection{Additive characters}
Write $q=p^k$. An additive character of $\F_{q^n}$ is a homomorphism $\psi$ between the additive group $(\F_{q^n}, +)$ and the multiplicative group $\mathbb C^{\times}$ of invertible complex numbers. The \emph{canonical} additive character of $\F_{q^n}$, denoted by $\psi_1$, is the map $\alpha \mapsto \exp\left(\frac{\mathrm{Tr}(\alpha)}{p}\right)$, where $\mathrm{Tr}(\alpha)=\sum_{i=0}^{kn}\alpha^{p^i}\in \F_p$ is the absolute field trace and $\exp(z)=e^{2\pi i z}$ denotes the complex exponential function. The set $\widehat{\fqn}$ of additive characters of $\fqn$ forms a multiplicative abelian group whose elements are the characters $\psi_a: \F_{q^n}\to \mathbb C^{\times}, a\in \fqn$ with $\psi_a(x)=\psi_1(ax)$; the character $\psi_0$ maps all the elements of $\fqn$ to $1\in \mathbb C$ and is called {\em trivial}. Any other character in $\widehat{\fqn}$ is called {\em nontrivial}.

Similarly to the additive group $\F_{q^n}$, the set $\widehat{\fqn}$ has an $\F_q[x]$-module structure. For $f\in \F_q[x]$ and $\psi\in \widehat{\fqn}$, the map $x\mapsto \psi(f\circ x)$ defines another character of $\widehat{\fqn}$, which we denote by $f\circ \psi$. From the results of the previous subsection, we easily deduce that, for $\psi\in \widehat{\F_{q^n}}$, the set $\mathcal I_{\psi}:=\{h\in \F_q[x]\,|\, h\circ \psi=\psi_0\}$ is a nonzero ideal of $\F_q[x]$, hence it is generated  by a unique monic polynomial in $\F_q[x]$. This polynomial is the \emph{$\F_q$-order} of $\psi$ and it is denoted by $\oord(\psi)$.
%

It is clear that $\oord(\psi)$ is a divisor of $x^n-1$. Conversely, for each monic divisor $f\in \F_q[x]$ of $x^n-1$, there exist $\Phi_q(f)$ additive characters of $\fqn$ with $\F_q$-order $f$.

\begin{remark}\label{rem:trivial}
It is clear that the trivial character is the only additive character of $\F_{q^n}$ of $\F_q$-order 1.
\end{remark}

We conclude this section with an important auxiliary result on additive character sums, see Theorem~5.38 of \cite{lidlniederreiter97}.

\begin{theorem} \label{thm:weil}
Let $P\in \F_{q^n}[x]$ be a polynomial not of the form $r(x)^p-r(x)+\delta$ with $\delta\in \F_{q^n}$ 
and let $\psi$ be a nontrivial additive character of $\fqn$. Then
\[ \left|\sum_{c\in \fqn}\psi(P(c))\right|\le (\deg(P)-1)q^{n/2}. \]
\end{theorem}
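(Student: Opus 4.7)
The plan is to interpret the exponential sum as a linear combination of point counts on an Artin--Schreier curve and then invoke the Hasse--Weil bound. Writing $\psi=\psi_a$ with $a\in\F_{q^n}^*$, set $Q(x):=aP(x)\in\F_{q^n}[x]$ and consider the affine plane curve $C:y^p-y=Q(x)$. The hypothesis that $P$ is not of the form $r(x)^p-r(x)+\delta$ over $\F_{q^n}$, together with $a\neq 0$, ensures $Q$ is not of that form either; hence the Artin--Schreier extension $\F_{q^n}(x,y)/\F_{q^n}(x)$ is a nontrivial cyclic cover of degree $p$. First I would check that $C$ is geometrically irreducible and compute the genus of its smooth projective model $\tilde C$. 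Standard Artin--Schreier theory (Riemann--Hurwitz plus the conductor formula for the unique ramified point at infinity) gives $g=(p-1)(d-1)/2$ when $d:=\deg P$ is coprime to $p$, with a mild modification otherwise that can be handled by replacing $Q$ by $Q-s^p+s$ for a suitable $s\in\F_{q^n}[x]$ to lower the degree of the polar part modulo $p$-th powers.

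Next I would count $\F_{q^n}$-points of $C$ in two ways. For each $c\in\F_{q^n}$ the orthogonality identity
\[
\#\{y\in\F_{q^n}:y^p-y=\alpha\}=\sum_{j=0}^{p-1}\psi_1(j\alpha)
\]
(where $\psi_1$ is the canonical additive character) yields
\[
\#C(\F_{q^n})=q^n+\sum_{j=1}^{p-1}\sum_{c\in\F_{q^n}}\psi_{ja}(P(c)).
\]
On the other hand, Weil's theorem applied to $\tilde C$ gives $|\#\tilde C(\F_{q^n})-(q^n+1)|\le 2g\,q^{n/2}$; removing the bounded contribution of the points at infinity produces the corresponding bound on $|\#C(\F_{q^n})-q^n|$.

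Combining these two identities immediately bounds the \emph{aggregate} sum $\sum_{j=1}^{p-1}S_{ja}$ with $S_b:=\sum_c\psi_b(P(c))$, but not an individual $S_a$. To isolate a single character, I would apply the same argument over every extension $\F_{q^{nm}}$, obtaining estimates on the zeta function of $\tilde C$ and on the associated $L$-functions
\[
L(T,\psi_{ja},P)=\exp\Bigl(\sum_{m\ge 1}\frac{1}{m}\sum_{c\in\F_{q^{nm}}}\psi_{ja}(P(c))\,T^m\Bigr).
\]
The Galois group of the cover $\tilde C\to\mathbb{P}^1$ is $\F_p$, and the zeta function factors as $Z(\tilde C,T)=Z(\mathbb{P}^1,T)\cdot\prod_{j=1}^{p-1}L(T,\psi_{ja},P)$. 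Rationality of each factor, together with the functional equation and the Riemann hypothesis for $\tilde C$, forces each $L(T,\psi_{ja},P)$ to be a polynomial of degree exactly $d-1$ whose reciprocal roots are algebraic integers of absolute value $q^{n/2}$. Reading off the coefficient of $T$ yields the asserted bound
\[
\Bigl|\sum_{c\in\F_{q^n}}\psi_a(P(c))\Bigr|\le (d-1)\,q^{n/2}.
\]

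The main obstacle is the extraction of the individual $L$-factor: one must justify that the zeta function of $\tilde C$ genuinely splits as the indicated product indexed by the $p-1$ nontrivial characters of $\mathrm{Gal}(\tilde C/\mathbb{P}^1)\cong\F_p$, and that each factor is \emph{separately} a polynomial of degree $d-1$ with roots of the correct absolute value. This is the essential input from the theory of $L$-functions of Galois covers of curves. A secondary technical nuisance is the genus computation when $p\mid d$, for which the normalisation trick $Q\mapsto Q-s^p+s$ must be carried out carefully to reduce to the unramified-or-totally-ramified case in which the formula $g=(p-1)(d'-1)/2$ (with $d'$ the reduced degree) applies.
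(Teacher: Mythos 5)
The paper offers no proof of this statement: it is quoted verbatim (as Theorem~5.38 of Lidl--Niederreiter) and used as a black box, so there is nothing internal to compare your argument against. Your outline is the classical proof of the Weil bound for additive character sums: pass to the Artin--Schreier cover $y^p-y=Q(x)$, factor its zeta function as $Z(\pp^1,T)$ times the product of the $L$-functions attached to the $p-1$ nontrivial characters of the Galois group $\cong\F_p$, and apply the Riemann hypothesis for curves to see that each $L$-factor is a polynomial of degree at most $\deg P-1$ whose reciprocal roots have modulus $q^{n/2}$. The architecture (the point-count identity via orthogonality, the genus computation, the extraction of a single $L$-factor rather than the aggregate sum) is the standard and correct one.

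There is, however, one step that genuinely fails: the assertion that, because $P$ is not of the form $r(x)^p-r(x)+\delta$ and $a\neq0$, the polynomial $Q=aP$ cannot be of that form either. Reduction modulo the image of $g\mapsto g^p-g$ replaces $cx^{pm}$ by $c^{1/p}x^{m}$, an operation that is $\F_p$-linear but not $\F_{q^n}$-linear in the coefficients, so multiplication by $a$ can carry a nondegenerate polynomial into the degenerate class. A concrete instance: over $\F_4=\F_2(\omega)$ the polynomial $P(x)=\omega x^2+x$ is not of the form $r(x)^2-r(x)+\delta$, yet $\omega P(x)=(\omega x)^2-(\omega x)$, whence $\sum_{c\in\F_4}\psi_{\omega}(P(c))=4$, which exceeds the claimed bound $(\deg P-1)\cdot 4^{1/2}=2$. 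So the statement as literally written (for an arbitrary nontrivial $\psi$) is false, and the gap in your reduction cannot be closed; the correct hypothesis is that $aP$ is not of the form $r^p-r+\delta$, where $\psi=\psi_a$, or equivalently one states the bound for the canonical character only (or assumes $p\nmid\deg P$, as Lidl--Niederreiter do). This does not affect the paper, which invokes Theorem~\ref{thm:weil} only for the canonical character $\psi_1$ applied to $ah(x)+bH(x)$ in the proof of Theorem~\ref{thm:main}, and that is exactly the situation your argument does cover. Once the hypothesis is placed on $aP$ (noting that $jaP$ is degenerate for one $j\in\F_p^*$ if and only if it is for all, since $j(g^p-g)=(jg)^p-(jg)$), the remainder of your sketch goes through; the secondary issue you flag, the case $p\mid\deg P$ handled by the substitution $Q\mapsto Q-s^p+s$, only lowers the degree of the $L$-factor and therefore strengthens the bound.
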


The above implies that polynomials of the form $r(x)^p-r(x)+\delta$, with $\delta\in \F_{q^n}$, are special, so the following definition is essential.

\begin{definition} \label{def:singular}
Let $P\in \F_{q^n}[x]$. If there exists some $r\in\F_{q^n}[x]$ and $\delta\in\F_{q^n}$, such that $P(x) = r(x)^p-r(x)+\delta$, then $P$ is called \emph{singular}. Otherwise, it is called \emph{nonsingular}. 
\end{definition}

\section{Introducing $(f, g)$-freeness} \label{sec:freeness}

In this section we introduce the $(f,g)$-free elements. We stress that these elements are in fact the additive analogues to $(r,n)$-free elements, as they were introduced in \cite{cohenkapetanakisreis22}. First, we recall the (additive) concept of freeness. For a positive integer $n$ and a divisor $g\in \F_q[x]$ of $x^n-1$, an element $\alpha\in \F_{q^n}$ is \emph{$g$-free} if the equality $\alpha=h\circ \beta$ with $h$ a monic divisor of $g$ and $\beta\in \F_{q^n}$ implies $h(x)=1$ and $\alpha=\beta$.

\begin{definition}
Let $n$ be a positive integer and let $f, g\in \F_q[x]$ be such that $g(x)$ divides $x^n-1$ and $f(x)$ divides $\frac{x^n-1}{g(x)}$. An element $\alpha\in \F_{q^n}$ is \emph{$(f, g)$-free} if the following hold:
\begin{enumerate}[(i)]
\item $\oord(\alpha)$ divides $\frac{x^n-1}{g(x)}$, i.e., $\frac{x^n-1}{g(x)}\circ \alpha=0$;
\item $\alpha$ is $f$-free over the set of roots of the equation $\frac{x^n-1}{g(x)}\circ y=0$, i.e., if $\alpha=f_0\circ \beta$ with $f_0\in \F_q[x]$ a monic divisor of $f$ and $\frac{x^n-1}{g(x)}\circ \beta=0$, then $f_0(x)=1$ and $\alpha=\beta$.
\end{enumerate}
\end{definition}

The following is straightforward.

\begin{remark}\label{remark:freeness}
Let $n$ be a positive integer and let $f \in\F_q[x]$ be a divisor of $x^n-1$,
\begin{enumerate}[(i)]
\item as $(x^n-1)\circ \alpha=0$ for every $\alpha\in \F_{q^n}$, the $(f, 1)$-free elements of $\F_{q^n}$ are just the usual $f$-free elements;
\item for $f\mid x^n-1$, the $\left(\frac{x^n-1}{f}, f\right)$-free elements of $\F_{q^n}$ are exactly the elements $\alpha$ with $\oord(\alpha)=\frac{x^n-1}{f(x)}$. In particular, an element $\alpha \in \F_{q^n}$ is normal over $\F_q$ if and only if it is $(x^n-1, 1)$-free.
\end{enumerate}
\end{remark}
The following lemma characterizes the $(f, g)$-free elements based on their $\F_q$-orders.
\begin{lemma} \label{lem:fgfree}
  Let $g\mid x^n-1$ and $f\mid \frac{x^n-1}{g}$. Then some $\alpha\in \F_{q^n}$ is $(f,g)$-free if and only if $\alpha=g\circ\beta$ for some $\beta\in \F_{q^n}$ but $\alpha$ is not of the form $(gp)\circ\gamma$ with $\gamma\in \F_{q^n}$, for every irreducible factor $p\in \F_q[x]$ of $f$. In particular, $\alpha\in \F_{q^n}$ is $(f,g)$-free if and only if  
$\gcd\left(fg, \frac{x^n-1}{\oord(\alpha)}\right)=g$.  
  \end{lemma}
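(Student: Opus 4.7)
The plan is to chase both characterizations through the $\F_q[x]$-module structure of $\F_{q^n}$, using Lemmas~\ref{lem:q-associate} and~\ref{lem:gcd} and the fact that on a cyclic module the image of multiplication by $g$ coincides with the kernel of multiplication by $(x^n-1)/g$.

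I would start by reformulating condition~(i) of the definition. Setting $h:=\frac{x^n-1}{g}$, I would observe that on the $\F_q[x]$-module $\F_{q^n}\cong \F_q[x]/(x^n-1)$, the inclusion $\operatorname{im}(g\circ\cdot)\subseteq \ker(h\circ\cdot)$ is immediate from Lemma~\ref{lem:q-associate} since $(hg)\circ\beta=(x^n-1)\circ\beta=0$. Both subspaces have $\F_q$-dimension $\deg h=n-\deg g$ (by direct computation in $\F_q[x]/(x^n-1)$, using $g\mid x^n-1$), so they coincide. Hence condition~(i) is equivalent to $\alpha=g\circ\beta$ for some $\beta\in\F_{q^n}$. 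Translating via $\oord$, this reads $\oord(\alpha)\mid h$, i.e.\ $g\mid \frac{x^n-1}{\oord(\alpha)}$.

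Next, I would analyse condition~(ii) once (i) is assumed. If (ii) fails, pick a monic $f_0\mid f$ with $f_0\neq 1$ and a $\beta\in\ker(h\circ\cdot)$ with $\alpha=f_0\circ\beta$; let $p\mid f_0$ be irreducible and write $f_0=p f_1$. By Lemma~\ref{lem:q-associate}, $\beta':=f_1\circ\beta$ still satisfies $h\circ\beta'=f_1\circ(h\circ\beta)=0$, so by the previous paragraph $\beta'=g\circ\gamma$ for some $\gamma$; hence $\alpha=p\circ\beta'=(pg)\circ\gamma$ for an irreducible factor $p\mid f$. Conversely, if $\alpha=(pg)\circ\gamma$ for some irreducible $p\mid f$, set $\beta=g\circ\gamma$; then $h\circ\beta=(hg)\circ\gamma=0$, showing that (ii) fails with $f_0=p$. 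Combined with the first paragraph, this proves the first stated equivalence.

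Finally, I would convert that equivalence to the $\gcd$ form. Writing $D=\frac{x^n-1}{\oord(\alpha)}$, from the first paragraph $\alpha=g\circ\beta$ iff $g\mid D$, and similarly $\alpha=(gp)\circ\gamma$ iff $gp\mid D$. Thus $\alpha$ is $(f,g)$-free iff $g\mid D$ and, for every irreducible $p\mid f$, $gp\nmid D$. Assuming $g\mid D$ and writing $D=gD'$, the identity $\gcd(fg,gD')=g\cdot\gcd(f,D')$ (which holds by valuations on each irreducible) gives $\gcd(fg,D)=g\cdot\gcd(f,D')$, and the no-$p$ condition is exactly $\gcd(f,D')=1$. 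Hence $\alpha$ is $(f,g)$-free iff $\gcd(fg,D)=g$, which is the second stated equivalence.

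The only subtle point is the dimension/kernel–image identification in the first paragraph; everything else is a mechanical translation once that identification and Lemma~\ref{lem:q-associate} are in hand. I anticipate no serious obstacles, only bookkeeping, particularly because we do \emph{not} need $\gcd(f,g)=1$: the characterization via $\gcd(fg,D)=g$ is robust to $f$ and $g$ sharing irreducible factors, as the factorization $D=gD'$ and the gcd identity hold unchanged in that situation.
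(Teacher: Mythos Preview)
Your proof is correct and follows essentially the same route as the paper: both arguments identify $\ker\bigl(\frac{x^n-1}{g}\circ\cdot\bigr)$ with $\operatorname{im}(g\circ\cdot)$, translate $(f,g)$-freeness into the divisibility conditions $g\mid D$ and $gp\nmid D$ for $D=\frac{x^n-1}{\oord(\alpha)}$, and then package this as the single gcd equality. The only cosmetic differences are that you work through the abstract module isomorphism $\F_{q^n}\cong\F_q[x]/(x^n-1)$ while the paper fixes a normal generator via Lemma~\ref{lem:generator} and tracks its coefficient polynomial, and you spell out the reduction from a general $f_0\mid f$ to an irreducible $p\mid f$ more carefully than the paper's terse ``follows directly''.
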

  \begin{proof}
  Clearly, the set $\{g\circ\beta : \beta\in \F_{q^n} \}$ describes the elements of $\F_{q^n}$ whose $\F_q$-order divides $\frac{x^n-1}{g}$, thus, the first statement follows directly by the definition of $(f,g)$-free elements.
  
%

 For the second statement, from Lemma~\ref{lem:generator}, $\alpha = h'\circ\delta'$, for some normal $\delta'\in\F_{q^n}$ and some $h'\in\F_q[x]$ of degree $\leq n-1$.  Also, if we set $h=\gcd(h',x^n-1)$, Lemma~\ref{lem:gcd} entails that $\oord(\alpha)=\frac{x^n-1}{h}$.
Now, from the first part of the proof and the definition of $(f, g)$-freeness, it follows that $\alpha$ is $(f, g)$-free if and only if $\frac{x^n-1}{h}$ divides $\frac{x^n-1}{g}$ but does not divide $\frac{x^n-1}{gp}$ for any irreducible factor $p\in \F_q[x]$ of $f$. In other words, $\alpha$ is $(f, g)$-free if and only if $g=hs$ where $\gcd(s, f)=1$. Since $\oord(\alpha)=\frac{x^n-1}{h}$, we have that   \[ \gcd\left(fg, \frac{x^n-1}{\oord(\alpha)}\right)=g\cdot \gcd(f, s), \] from where the result follows.
  \end{proof}
\begin{remark}\label{remark:squarefree}
Observe that $(f,g)$-freeness is equivalent to $(f',g)$-freeness, where $f'$ can be any polynomial in $\F_q[x]$ dividing $\frac{x^n-1}{g}$ that has exactly the same monic irreducible factors with $f$. In particular, we can replace $f$ by its squarefree part. This will be done without further mention.
\end{remark}

In the proceeding sections, we will need a convenient expression, using character sums, of the characteristic function of $(f,g)$-free elements of $\F_{q^n}$, i.e., of the function
\[ \I_{f,g}(\alpha) := \begin{cases}
  1,& \text{if } \alpha\text{ is } (f,g)\text{-free}, \\
  0,& \text{otherwise},
\end{cases} \]
where $\alpha\in\F_{q^n}$. Towards this end, we prove the following, which is the additive analogue to Proposition~3.6 of \cite{cohenkapetanakisreis22} and, in fact, the arguments we use are merely an adaption of the ones found in the proof of Proposition~3.6 of \cite{cohenkapetanakisreis22}, adjusted accordingly to the present context.
\begin{proposition}\label{prop:vinogradov}
Let $f,g\in\F_q[x]$ be such that $g\mid x^n-1$ and $f\mid \frac{x^n-1}{g(x)}$. Then, for every $\alpha\in\F_{q^n}$, we have that
\[ \I_{f,g}(\alpha) = \frac{\Phi_q(f)}{|fg|} \sum_{t\mid fg} \frac{\mu_q(t_{(g)})}{\Phi_q(t_{(g)})} \sum_{\oord(\psi) = t} \psi(\alpha) , \]
where in the outer sum, the polynomial $t$ is monic and polynomial division is over $\F_q$.
\end{proposition}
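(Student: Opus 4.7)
The plan is to reduce to the case where $f$ is squarefree, express $(f,g)$-freeness through an inclusion-exclusion over the irreducible factors of $f$, rewrite the resulting divisibility indicators as character sums via orthogonality in $\widehat{\fqn}$, and finally swap the order of summation and simplify the inner arithmetic sum using multiplicativity.

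First, by Remark~\ref{remark:squarefree}, I may assume $f$ is squarefree; this is consistent with the right-hand side, since $\mu_q(t_{(g)})$ vanishes unless $t_{(g)}$ is squarefree. Under this assumption, Lemma~\ref{lem:fgfree} characterizes $(f,g)$-freeness of $\alpha$ as: $g\mid k$ and $gp\nmid k$ for every irreducible factor $p$ of $f$, where $k:=\frac{x^n-1}{\oord(\alpha)}$. Since $\alpha\in d\circ \fqn$ is equivalent to $d\mid k$ for any monic divisor $d$ of $x^n-1$, a standard inclusion-exclusion over the squarefree divisors of $f$ yields
\[ \I_{f,g}(\alpha)=\sum_{t_0\mid f}\mu_q(t_0)\,\mathbf{1}[\alpha\in gt_0\circ \fqn]. \]

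Next, I will translate each divisibility indicator into characters. For any monic divisor $d$ of $x^n-1$, the set $H_d:=\{\psi\in\widehat{\fqn}:\oord(\psi)\mid d\}$ is the annihilator of $d\circ \fqn$ under the pairing $(\alpha,\psi)\mapsto\psi(\alpha)$, and by the character count mentioned before Remark~\ref{rem:trivial} together with the divisor-sum identity $\sum_{e\mid d}\Phi_q(e)=|d|$, it has cardinality $|d|$. Standard orthogonality therefore gives
\[ \mathbf{1}[\alpha\in d\circ \fqn]=\frac{1}{|d|}\sum_{\oord(\psi)\mid d}\psi(\alpha), \]
and substituting $d=gt_0$ transforms the previous display into
\[ \I_{f,g}(\alpha)=\sum_{t_0\mid f}\frac{\mu_q(t_0)}{|gt_0|}\sum_{\oord(\psi)\mid gt_0}\psi(\alpha). \]

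The crux is the final swap: letting $t=\oord(\psi)$ range over monic divisors of $fg$, the condition $t\mid gt_0$ for $t_0\mid f$ is equivalent to $t_{(g)}\mid t_0$. A short local check at each irreducible prime, using $t\mid fg$ together with the squarefreeness of $f$, shows that $t_{(g)}$ is itself squarefree and divides $f$; hence every $t\mid fg$ actually contributes, and one can reparameterize $t_0=t_{(g)}\cdot u$ with $u$ ranging over divisors of $f/t_{(g)}$. Exploiting the multiplicativity of $\mu_q$, $\Phi_q$ and $|\cdot|$ together with the telescoping identity $\sum_{u\mid h}\mu_q(u)/|u|=\Phi_q(h)/|h|$ valid for squarefree $h$, the inner arithmetic sum collapses:
\[ \sum_{\substack{t_0\mid f\\ t_{(g)}\mid t_0}}\frac{\mu_q(t_0)}{|gt_0|}=\frac{\mu_q(t_{(g)})}{|gt_{(g)}|}\cdot\frac{\Phi_q(f/t_{(g)})}{|f/t_{(g)}|}=\frac{\Phi_q(f)}{|fg|}\cdot\frac{\mu_q(t_{(g)})}{\Phi_q(t_{(g)})}, \]
using $\Phi_q(f)=\Phi_q(t_{(g)})\Phi_q(f/t_{(g)})$ at the last step. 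Plugging this into the previous expression produces the claimed formula. The main obstacle is precisely this bookkeeping: one must verify that $t_{(g)}$ is a legitimate squarefree divisor of $f$ whenever $t\mid fg$, so that the interchange of summations and the subsequent multiplicative simplifications are carried out without double counting.
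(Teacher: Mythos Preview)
Your proof is correct and follows essentially the same route as the paper's own argument: both obtain the inclusion--exclusion identity $\I_{f,g}(\alpha)=\sum_{d\mid f}\mu_q(d)\,I_{gd}(\alpha)$ from Lemma~\ref{lem:fgfree}, express each $I_{gd}$ via orthogonality of additive characters, and then interchange the two summations and simplify the resulting arithmetic sum multiplicatively. The only cosmetic difference is that you reduce to squarefree $f$ up front (which is legitimate, since $\Phi_q(f)/|f|$ and the support of $\mu_q(t_{(g)})$ depend only on the radical of $f$), whereas the paper carries general $f$ through the computation.
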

\begin{proof}
Take some $\alpha\in\F_{q^n}$. Let $p_1,\ldots ,p_n$ be the distinct monic irreducible factors of $f$
over $\F_q$. Lemma~\ref{lem:fgfree} implies that $\alpha$ is $(f,g)$-free if and only if $\alpha$ is of the form $g\circ\beta$ for some $\beta\in\F_{q^n}$, but not of the form $(gp_i)\circ\beta$ for any $1\leq i\leq n$ and $\beta\in\F_{q^n}$. It follows that if $I_h$ is the characteristic function for elements of the form $h\circ\beta$, where $h\in\F_q[x]$, such that $h\mid x^n-1$, and $\beta \in\F_{q^n}$, then
\begin{equation}\label{eq:I1} 
\I_{f,g}(\alpha) = I_g(\alpha) \prod_{i=1}^n (1-I_{gp_i}(\alpha)) . 
\end{equation}
Clearly $I_g(\alpha)I_{gp_i}(\alpha) = I_{gp_i}(\alpha)$, for every $\alpha\in\F_{q^n}$ and $i=1,\ldots,n$, hence, Eq.~\eqref{eq:I1} yields
\begin{equation}\label{eq:I2} 
\I_{f,g}(\alpha) = \sum_{d\mid f} \mu_q(d) I_{gd}(\alpha), 
\end{equation}
where the sum is over the monic divisors of $f$, defined over $\F_q$.
Regarding $I_h$, the orthogonality relations imply that, for every $h\in\F_q[x]$, such that $h\mid x^n-1$,
\[ 
I_h(\alpha) = \frac{1}{|h|} \sum_{\oord(\psi) \mid h} \psi(\alpha) = \frac{1}{|h|} \sum_{d\mid h}\sum_{\oord(\psi)=d}\psi(\alpha) , 
\]
for every $\alpha\in\F_{q^n}$. Now, Eq.~\eqref{eq:I2} becomes
\begin{align}
\I_{f,g}(\alpha) & = \frac{1}{|g|}\sum_{d\mid f}\sum_{e\mid gd} \frac{\mu_q(d)}{|d|} \sum_{\oord(\psi)=e}\psi(\alpha) = \frac{1}{|g|} \sum_{d\mid f}\sum_{e\mid gd}A(d)B_\alpha(e) \nonumber \\
 & = \frac{1}{|g|} \sum_{e\mid fg} \sum_{d\mid \frac{f}{e_{(g)}}} A(e_{(g)}d)B_\alpha(e) = \frac{1}{|g|} \sum_{e\mid fg} B_\alpha(e) \sum_{d\mid \frac{f}{e_{(g)}}} A(e_{(g)}d) , \label{eq:I3}
\end{align}
where, for each $h\in \F_q[X]$,  we have that $A(h) := \mu_q(h)/|h|$ and $B_\alpha(h) := \sum_{\oord(\psi)=h}\psi(\alpha)$. Regarding the inner sum in the last expression, we have that
\begin{multline*}
\sum_{d\mid \frac{f}{e_{(g)}}} A(e_{(g)}d) = \sum_{d\mid \frac{f}{e_{(g)}}} \frac{\mu_q(e_{(g)}d)}{|e_{(g)}d|} = \frac{\mu_q(e_{(g)})}{|e_{(g)}|} \sum_{\substack{d\mid f \\ \gcd(d,e_{(g)})=1}} \frac{\mu_q(d)}{|d|} \\ = \frac{\mu_q(e_{(g)})}{|e_{(g)}|} \cdot \frac{\Phi_q(f_{e,g})}{|f_{e,g}|} =  \frac{\mu_q(e_{(g)})}{\Phi_q(e_{(g)})} \cdot \frac{\Phi_q(e_{(g)}f_{e,g})}{|e_{(g)}f_{e,g}|} = \frac{\mu_q(e_{(g)})}{\Phi_q(e_{(g)})} \cdot \frac{\Phi_q(f)}{|f|} ,
\end{multline*}
where $f_{e,g}$ is the highest degree factor of $f$ that is relatively prime to $e_{(g)}$. We plug the latter into Eq.~\eqref{eq:I3} and obtain
\[ \I_{f,g}(\alpha) = \frac{\Phi_q(f)}{|fg|} \sum_{e\mid fg} \frac{\mu_q(e_{(g)})}{\Phi_q(e_{(g)})} B_\alpha(e) . \]
The result follows upon replacing $B_\alpha(e)$ by $\sum_{\oord(\psi)=e}\psi(\alpha)$.
\end{proof}

\section{On $(f, g)$-freeness through polynomial values} \label{sec:freepolval}

For polynomials $h, H\in \F_{q^n}[x]$, we intend to study the number of pairs $(h(y), H(y))$ with $y\in \F_{q^n}$ such that $h(y)$ is $(f, g)$-free and $H(y)$ is $(F, G)$-free. We aim to employ Theorem~\ref{thm:weil} but in order to effectively use this result, we must
restrict ourselves to those pairs $(h,H)$, such that for every $a,b\in\F_{q^n}$ with $(a,b)\neq (0,0)$, the polynomial $ah(x) + bH(x)$ is nonsingular. For convenience, we will call such pairs $(h,H)$ \emph{nonsingular}, while if there exist some $a,b\in\F_{q^n}$, with $(a,b)\neq (0,0)$, such that $ah(x) + bH(x)$ is singular, we will call the pair $(h,H)$ \emph{singular}.

We give a simple example, illustrating that the above can be necessary in order to have at least one pair $(h(y), H(y))$ of polynomial values with prescribed freeness, thus it is natural to restrain ourselves to nonsingular pairs.

\begin{example}
Write $q=p^k$ and suppose that $H(x)=h(x)^q$. In particular, $H(x)-h(x)=h(x)^q-h(x)=r(x)^p-r(x)$ with $r(x)=\sum_{i=0}^{k-1}h(x)^{p^{i}}$, thus $(h,H)$ is singular.
For any $y\in \fqn$, we have that $H(y)=x\circ h(y)$.  By Lemma~\ref{lem:gcd}, the $\F_q$-orders of $h(y), H(y)$ coincide. From Remark~\ref{remark:freeness}, we can produce many examples in which no pair $(h(y), H(y))$ with $y\in \fqn$ satisfies that $h(y)$ is $(f, g)$-free and $H(y)$ is $(F, G)$-free.
\end{example}
The next example provides a large family of nonsingular pairs.
\begin{example}\label{ex:main}
Observe that if $h, H\in \F_q[x]$ are nonzero polynomials of degree not divisible by $p$, then $ah(x)+bH(x)$ is nonsingular 
for $(a, b)\ne (0, 0)$ unless $\deg(h)=\deg(H)=1$. In particular, if $\gcd(\deg(h)\cdot\deg(H), p)=1$ and $\deg(h)\cdot \deg(H)>1$, the pair $(h, H)$ is nonsingular.
\end{example}


Regarding singularity, in the proceeding section, see Proposition~\ref{prop:nonsingular}, we highlight another case where singular pairs require special treatment. However, for now, we confine ourselves to nonsingular pairs and we obtain the following result. 

\begin{theorem}\label{thm:main}
Fix $q$ a prime power and $n\ge 1$ a positive integer. Let $f, F\in \F_q[x]$ be divisors of $x^n-1$ and let $g, G\in \F_q[x]$ be such that $g$ divides $\frac{x^n-1}{f(x)}$ and $G$ divides $\frac{x^n-1}{F(x)}$. Let $h, H\in \F_{q^n}[x]$ be such that 
$(h,H)$ is nonsingular.
Set $D_1:=\deg (fFgG)$ and let $D_2+1$ be the maximum degree of the polynomials $ah(x)+bH(x)$ as $a, b$ run over the elements of $\F_{q^n}$ with $(a, b)\ne (0, 0)$.
Then the number $N_{h, H}=N_{h, H}(f, g, F, G)$ of elements $y\in \F_{q^n}$ such that $h(y)$ is $(f, g)$-free and $H(y)$ is $(F, G)$-free satisfies
$$\frac{N_{h, H}\cdot q^{D_1}}{\Phi_q(f)\Phi_q(F)}=q^n+\ell,$$
where $|\ell|\le D_2W(f)W(F)q^{\deg(gG)+n/2}$.
\end{theorem}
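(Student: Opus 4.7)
The plan is to count $N_{h,H}$ by introducing the characteristic functions $\I_{f,g}$ and $\I_{F,G}$, expanding via Proposition~\ref{prop:vinogradov}, and separating the contribution of the trivial characters (which yields the main term $q^n$) from that of the nontrivial ones (which becomes the error term $\ell$ after applying Weil's bound in Theorem~\ref{thm:weil}).

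\smallskip

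More precisely, I would write
\[ N_{h,H} = \sum_{y\in\fqn} \I_{f,g}(h(y))\,\I_{F,G}(H(y)) \]
and substitute Proposition~\ref{prop:vinogradov} for each indicator, producing a quadruple sum indexed by monic divisors $t_1\mid fg$, $t_2\mid FG$ and by characters $\psi_1,\psi_2$ of the indicated $\F_q$-orders. Pulling constants out, the interior of the expression is a sum of the form $S(\psi_1,\psi_2):=\sum_{y\in\fqn}\psi_1(h(y))\psi_2(H(y))$. Writing $\psi_i=\psi_1^{(a_i)}$ for some $a_i\in\fqn$ (with $a_i=0$ iff $\psi_i$ is trivial, by Remark~\ref{rem:trivial}), one gets $S(\psi_1,\psi_2)=\sum_{y\in\fqn}\psi_1\bigl(a_1 h(y)+a_2 H(y)\bigr)$.

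\smallskip

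The main term comes from $(t_1,t_2)=(1,1)$, where $a_1=a_2=0$ and $S=q^n$; since $\mu_q(1)/\Phi_q(1)=1$, this single contribution is
\[ \frac{\Phi_q(f)\Phi_q(F)}{|fg|\cdot|FG|}\cdot q^n = \frac{\Phi_q(f)\Phi_q(F)}{q^{D_1}}\cdot q^n, \]
which after multiplying by $q^{D_1}/(\Phi_q(f)\Phi_q(F))$ produces the desired $q^n$. For every other pair $(t_1,t_2)\ne(1,1)$, we have $(a_1,a_2)\ne(0,0)$, so nonsingularity of the pair $(h,H)$ ensures that $a_1h+a_2 H$ is nonsingular with degree at most $D_2+1$. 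Theorem~\ref{thm:weil} then yields $|S(\psi_1,\psi_2)|\le D_2\,q^{n/2}$ uniformly.

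\smallskip

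It remains to bound the total contribution of the nontrivial terms. Summing the trivial estimate $|S|\le D_2 q^{n/2}$ over all $\psi_1,\psi_2$ (using that there are $\Phi_q(t_i)$ characters of $\F_q$-order $t_i$), I get
\[ |\ell|\le D_2 q^{n/2}\sum_{\substack{t_1\mid fg\\ t_2\mid FG}}\frac{|\mu_q((t_1)_{(g)})\mu_q((t_2)_{(G)})|}{\Phi_q((t_1)_{(g)})\Phi_q((t_2)_{(G)})}\Phi_q(t_1)\Phi_q(t_2) = D_2 q^{n/2}\,T(fg,g)\,T(FG,G), \]
where $T$ is as in Lemma~\ref{lem:arithfun}. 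Since $\gcd(f,g)=\gcd(F,G)=1$, one has $(fg)_{(g)}=f$ and $\gcd(fg,f)=f$ (similarly for the second factor), so Lemma~\ref{lem:arithfun} gives $T(fg,g)=|g|\,W(f)$ and $T(FG,G)=|G|\,W(F)$. Plugging in $|g||G|=q^{\deg(gG)}$ yields the announced bound $|\ell|\le D_2 W(f)W(F)q^{\deg(gG)+n/2}$ (one may, painlessly, add the $(1,1)$ term back to the bound since it is absorbed).

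\smallskip

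The only subtle point is bookkeeping: making sure that the nonsingularity hypothesis is invoked exactly when it is needed (i.e.\ for every $(a_1,a_2)\ne(0,0)$, including the \textquotedblleft half-trivial\textquotedblright\ cases where one character is trivial and the other is not), and recognizing the double sum over divisors as a product of two instances of the function $T$ from Lemma~\ref{lem:arithfun}. Both are straightforward once the machinery is set up; there is no real obstacle beyond correct accounting.
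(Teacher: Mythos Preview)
Your proposal is correct and follows the paper's proof essentially line by line: expand $N_{h,H}$ via Proposition~\ref{prop:vinogradov}, isolate the $(t_1,t_2)=(1,1)$ term as the main term $q^n$, bound every other term by $D_2 q^{n/2}$ using nonsingularity and Theorem~\ref{thm:weil}, and finish by recognizing the divisor sum as $T(fg,g)\,T(FG,G)$ via Lemma~\ref{lem:arithfun}. One small slip in your bookkeeping: the claim $\gcd(f,g)=1$ can fail (e.g.\ when $p\mid n$, so $x^n-1$ is not squarefree), but you do not actually need it, since $g\mid fg$ and $f\mid fg$ already give $\gcd(fg,g)=g$, $(fg)_{(g)}=f$, and $\gcd(fg,f)=f$, whence $T(fg,g)=|g|\,W(f)$ unconditionally.
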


\begin{proof}
It follows by the definition that \[ N_{h, H}=\sum_{w\in \F_{q^n}}\I_{f, g}(h(w))\cdot \I_{F, G}(H(w)). \] From Proposition~\ref{prop:vinogradov}, for $\delta=\frac{\Phi_q(f)\Phi_q(F)}{q^{D_1}}$, we have that 
\[ 
\frac{N_{h, H}}{\delta} 
= \sum_{t|fg, \; T|FG}\frac{\mu_q(t_{(g)})\cdot \mu_q(T_{(G)})}{\Phi_q(t_{(g)})\cdot \Phi_q(T_{(G)})}\sum_{\substack{\oord(\psi)=t\\ \oord(\psi')=T}}G_{h, H}(\psi, \psi'),
\] 
where $G_{h, H}(\psi, \psi')=\sum_{w\in \F_{q^n}}\psi(h(w))\psi'(H(w))$. Fix $t\mid fg$ and $T\mid FG$ and let $\psi, \psi'$ be additive characters of $\fqn$ with $\F_q$-orders $t$ and $T$, respectively. If $(t, T)\ne (1, 1)$, we have that $(\psi, \psi')=(\psi_a, \psi_b)$ for some $a, b\in \fqn$ with $(a, b)\ne (0, 0)$. In this case, $\psi(h(x))\cdot \psi'(H(x))=\psi_1(ah(x)+bH(x))$ and $(a, b)\ne (0, 0)$, where $\psi_1$ is the canonical additive character (hence nontrivial).
Since $(h,H)$ is nonsingular, 
$ah(x)+bH(x)$ is nonsingular 
and so Theorem~\ref{thm:weil} yields $|G_{h, H}(\psi, \psi')|\le D_2q^{n/2}$. For $(t, T)=(1, 1)$, $\psi=\psi'=\psi_0$ is the trivial additive character of $\fqn$ and so $|G_{h, H}(\psi_0, \psi_0)|=q^n$. 
Applying the above estimates we obtain
\[ \left|\frac{N_{h, H}}{\delta}- q^n\right|\le D_2q^{n/2}\cdot M, \]
where 
\[ M=\sum_{\substack{t\mid fg,\;  T\mid FG\\ (t, T)\ne (1, 1)}}\frac{|\mu_q(t_{(g)})\cdot \mu_q(T_{(G)})|}{\Phi_q(t_{(g)})\cdot \Phi_q(T_{(G)})}\sum_{\substack{ \oord(\psi)=t\\ \oord(\psi')=T}}1=T(fg, g)\cdot T(FG, G)-1, \]
and $T(f,g)$ is as in Lemma~\ref{lem:arithfun}. Note that, in the last equality we used the fact that for each monic divisor $R\in \F_q[x]$ of $x^n-1$ there exist $\Phi_q(R)$ characters of $\fqn$ of $\F_q$-order $R$. 
Further, Lemma~\ref{lem:arithfun} implies $T(fg, g)=q^{\deg(g)}\cdot W(f)$ and $T(FG,G) = q^{\deg(G)} W(F)$, so 
\begin{align*} \left|\frac{N_{f,F}}{\delta}-q^n\right| & \le  D_2q^{n/2}(W(f)W(F)q^{\deg(gG)}-1) \\  & \le D_2W(f)W(F)q^{\deg(gG)+n/2}. \qedhere \end{align*}
\end{proof}
%
The corollary below is an immediate consequence of Theorem~\ref{thm:main} and it provides us with practical sufficient condition for the existence of elements $y\in\F_{q^n}$, such that $h(y)$ is $(f,g)$-free and $H(y)$ is $(F,G)$-free.
\begin{corollary}\label{cor:main}
Assume the notation and the hypotheses of Theorem~\ref{thm:main}. We have that $N_{h,H}>0$ if \[ q^{n/2-\deg(gG)} > D_2 W(f)W(F). \]
\end{corollary}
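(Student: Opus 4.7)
The plan is to derive this directly from Theorem~\ref{thm:main} by isolating when the error term cannot swamp the main term. From Theorem~\ref{thm:main} we have the identity
\[ \frac{N_{h,H}\cdot q^{D_1}}{\Phi_q(f)\Phi_q(F)} = q^n + \ell, \qquad |\ell|\le D_2 W(f) W(F) q^{\deg(gG)+n/2}. \]
Since $\Phi_q(f)$, $\Phi_q(F)$, and $q^{D_1}$ are all positive, the sign of $N_{h,H}$ matches the sign of $q^n+\ell$. Thus $N_{h,H}>0$ is guaranteed as soon as $q^n > |\ell|$, and using the upper bound on $|\ell|$ this is implied by
\[ q^n > D_2 W(f) W(F) q^{\deg(gG)+n/2}. \]

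Rearranging by dividing through by $q^{\deg(gG)+n/2}$ yields the stated sufficient condition $q^{n/2-\deg(gG)} > D_2 W(f) W(F)$. No additional ingredient is required beyond Theorem~\ref{thm:main}; the corollary is essentially a packaging of its bound into a criterion that is easy to check in applications (one only needs to estimate the degrees $D_1, D_2$ and the squarefree-divisor counts $W(f), W(F)$, without having to keep track of $\Phi_q(f), \Phi_q(F)$, or $q^{D_1}$). There is no substantive obstacle in this step; the only thing to be mindful of is confirming that all the positivity assumptions on $\Phi_q(f)\Phi_q(F) q^{D_1}$ hold, which is immediate from the hypotheses of Theorem~\ref{thm:main} (namely $f, g, F, G$ are nonzero divisors of $x^n-1$).
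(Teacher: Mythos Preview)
Your argument is correct and matches the paper's approach: the paper states the corollary as an immediate consequence of Theorem~\ref{thm:main} without giving a separate proof, and your derivation (positivity of the scaling factor, then $q^n>|\ell|$) is exactly the intended one-line deduction.
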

Here, we point out that one of the main benefits of the introduction of $(r,n)$-freeness in \cite{cohenkapetanakisreis22} was its natural and seamless compatibility with the Cohen-Huczynska prime sieve \cite{cohenhuczynska03}. As we will see in the following result, this benefit is carried over to the additive analogue that we study here and it enables us to further weaken the condition of Corollary~\ref{cor:main}. Also, since the two proofs are very similar, we leave the proof as an exercise to the interested reader and refer them to Proposition~19 and Theorem~20 of \cite{cohenkapetanakisreis22} for the multiplicative analogue.
\begin{theorem} \label{thm:sieve}
Fix $q$ a prime power and $n\ge 1$ a positive integer. Let $f, F\in \F_q[x]$ be squarefree divisors of $x^n-1$ and let $g, G\in \F_q[x]$ be such that $g$ divides $\frac{x^n-1}{f(x)}$ and $G$ divides $\frac{x^n-1}{F(x)}$. Let $h, H\in \F_{q^n}[x]$ be such that 
$(h,H)$ is nonsingular.
Additionally, write $f=kp_1\cdots p_u$ and $F=KP_1\cdots P_v$, where $p_1,\ldots ,p_u,P_1,\ldots ,p_v$ are irreducible polynomials, such that \[ \delta := 1 - \sum_{i=1}^u 1/|p_i| - \sum_{j=1}^v 1/|P_j| >0. \]
Finally, set $D_1:=\deg (kKgG)$ and let $D_2$ be the maximum degree of the polynomials $ah(x)+bH(x)$ as $a, b$ run over the elements of $\F_{q^n}$ with $(a, b)\ne (0, 0)$.
Then the number $N_{h, H}=N_{h, H}(f, g, F, G)$ of elements $y\in \F_{q^n}$ such that $h(y)$ is $(f, g)$-free and $H(y)$ is $(F, G)$-free satisfies
\[ \frac{N_{h, H}\cdot q^{D_1}}{\delta\cdot \Phi_q(k)\Phi_q(K)}=q^n+\ell, \]
where $|\ell|\le D_2W(k)W(K)\left( \frac{u+v}{\delta} + 2 \right) q^{\deg(gG)+n/2}$.
\end{theorem}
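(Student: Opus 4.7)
The plan is to deduce Theorem~\ref{thm:sieve} from Theorem~\ref{thm:main} via a Cohen--Huczynska prime-sieve argument, in direct analogy with Proposition~19 and Theorem~20 of \cite{cohenkapetanakisreis22}. Using Lemma~\ref{lem:fgfree} and the squarefreeness of $f=kp_1\cdots p_u$, for $\alpha\in\F_{q^n}$ that is $(k,g)$-free, $\alpha$ is $(f,g)$-free if and only if $\alpha$ is $(kp_i,g)$-free for every $i=1,\ldots,u$. Setting $T_i:=\I_{k,g}-\I_{kp_i,g}\in\{0,1\}$, and analogously $T_j':=\I_{K,G}-\I_{KP_j,G}$, this yields the factorization
\[
\I_{f,g}(\alpha)\,\I_{F,G}(\beta)=\I_{k,g}(\alpha)\I_{K,G}(\beta)\prod_{i=1}^u\bigl(1-T_i(\alpha)\bigr)\prod_{j=1}^v\bigl(1-T_j'(\beta)\bigr).
\]
Applying the first-order Bonferroni inequality $\prod(1-a_\ell)\ge 1-\sum a_\ell$ to the combined product of $u+v$ factors in $\{0,1\}$ and rearranging yields the sieve inequality
\[
\I_{f,g}(h(y))\I_{F,G}(H(y))\ge\sum_{i=1}^u\I_{kp_i,g}(h(y))\I_{K,G}(H(y))+\sum_{j=1}^v\I_{k,g}(h(y))\I_{KP_j,G}(H(y))-(u+v-1)\,\I_{k,g}(h(y))\I_{K,G}(H(y)).
\]

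Summing over $y\in\F_{q^n}$ bounds $N_{h,H}(f,g,F,G)$ below by a linear combination of $u+v+1$ counts of the form $N_{h,H}(\cdot,g,\cdot,G)$, and the nonsingularity of $(h,H)$ lets me apply Theorem~\ref{thm:main} to each. Writing $\theta_0:=\Phi_q(k)\Phi_q(K)/q^{\deg(kKgG)}$ and using $\Phi_q(kp_i)=\Phi_q(k)(|p_i|-1)$ together with $W(kp_i)=2W(k)$ (and analogously for $KP_j$), each count decomposes into its main term (one of $\theta_0(1-1/|p_i|)q^n$, $\theta_0(1-1/|P_j|)q^n$, or $\theta_0 q^n$) plus an error controlled by Theorem~\ref{thm:main}. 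The main terms combine via
\[
\Bigl(u-\sum_{i=1}^u\tfrac{1}{|p_i|}\Bigr)+\Bigl(v-\sum_{j=1}^v\tfrac{1}{|P_j|}\Bigr)-(u+v-1)=\delta
\]
to produce exactly $\theta_0\delta\,q^n$, which, after dividing by $\theta_0\delta$, furnishes the $q^n$ on the right-hand side of the claim.

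What remains is to package the error into $|\ell|\le D_2 W(k)W(K)\bigl((u+v)/\delta+2\bigr)q^{\deg(gG)+n/2}$. Theorem~\ref{thm:main} supplies per-term bounds of the form $2\theta_0(1-1/|p_i|)D_2W(k)W(K)q^{\deg(gG)+n/2}$, $2\theta_0(1-1/|P_j|)D_2W(k)W(K)q^{\deg(gG)+n/2}$, and $\theta_0 D_2 W(k)W(K)q^{\deg(gG)+n/2}$ respectively, which must be combined with the coefficient $-(u+v-1)$ from the sieve. The main obstacle is precisely this final piece of bookkeeping: one must retain the factors $1-1/|p_i|$ and $1-1/|P_j|$ through the calculation (rather than bounding them crudely by $1$) and exploit the identity $\sum_i(1-1/|p_i|)+\sum_j(1-1/|P_j|)=(u+v)-(1-\delta)$, so that after the normalization by $\theta_0\delta$ the contributions consolidate into the stated sharp coefficient. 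All remaining steps are direct additive analogues of the multiplicative arguments in \cite{cohenkapetanakisreis22}.
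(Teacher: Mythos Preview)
Your overall strategy is exactly what the paper intends: it explicitly omits the proof and points to the multiplicative sieve in \cite{cohenkapetanakisreis22}, and your outline (sieve identity via Lemma~\ref{lem:fgfree}, Bonferroni lower bound, apply Theorem~\ref{thm:main} termwise, main terms collapse to $\theta_0\delta q^n$) is the correct adaptation.

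There is, however, a real gap in your error bookkeeping. If you bound each $N_{h,H}(kp_i,g,K,G)$, $N_{h,H}(k,g,KP_j,G)$ and $N_{h,H}(k,g,K,G)$ separately by Theorem~\ref{thm:main}, as you propose, then combining the error terms with the coefficient $-(u+v-1)$ and the identity $\sum_i(1-1/|p_i|)+\sum_j(1-1/|P_j|)=u+v-1+\delta$ gives
\[
2(u+v-1+\delta)+(u+v-1)=3(u+v-1)+2\delta,
\]
hence a coefficient $3(u+v-1)/\delta+2$ after normalizing by $\theta_0\delta$, not the stated $(u+v)/\delta+2$. Retaining the factors $1-1/|p_i|$ does not repair this: the loss comes from the fact that $W(kp_i)=2W(k)$ doubles the Weil error for each sieved term.

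The standard fix (and the one implicit in Proposition~19 of \cite{cohenkapetanakisreis22}) is to pass to the \emph{differenced} quantities
\[
S_i:=N_{h,H}(kp_i,g,K,G)-\Bigl(1-\tfrac{1}{|p_i|}\Bigr)N_{h,H}(k,g,K,G)
\]
and bound $|S_i|$ \emph{directly} from the character expansion of Proposition~\ref{prop:vinogradov}, rather than via the triangle inequality. In the expansion of $S_i$ the characters with $\F_q$-order dividing $kg$ cancel exactly, leaving only those $t\mid kp_ig$ with $p_i\mid t$; by Lemma~\ref{lem:arithfun} their weighted count is $q^{\deg g}W(k)$ (not $2q^{\deg g}W(k)$), so
\[
|S_i|\le \theta_0\Bigl(1-\tfrac{1}{|p_i|}\Bigr)D_2 W(k)W(K)q^{\deg(gG)+n/2}.
\]
Inserting these bounds into $N\ge \delta N_0+\sum_i S_i+\sum_j S_j'$ now yields the coefficient $(u+v-1+2\delta)/\delta=(u+v-1)/\delta+2\le (u+v)/\delta+2$, which suffices. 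Apart from this differencing step, your plan is complete and matches the paper.
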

The following corollary translates the above theorem into a practical sufficient condition for the existence of elements with the desired properties, in a similar fashion as Corollary~\ref{cor:main} did for Theorem~\ref{thm:main}.
\begin{corollary} \label{cor:sieve}
Assume the notation and the assumptions of Theorem~\ref{thm:sieve}. Then $N_{h, H}>0$, given that
\[ q^{n/2-\deg(gG)} > D_2W(k)W(K)\left( \frac{u+v}{\delta} + 2 \right) . \]
\end{corollary}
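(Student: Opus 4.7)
The plan is to derive this corollary as an immediate consequence of Theorem~\ref{thm:sieve}, in the same spirit that Corollary~\ref{cor:main} follows from Theorem~\ref{thm:main}. The strategy is to isolate $N_{h,H}$ from the equation in Theorem~\ref{thm:sieve} and show that, under the stated hypothesis, the main term dominates the error term, which forces $N_{h,H}$ to be strictly positive.

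Concretely, I would first rewrite the conclusion of Theorem~\ref{thm:sieve} as
\[ N_{h, H} = \frac{\delta \cdot \Phi_q(k)\Phi_q(K)}{q^{D_1}}\bigl(q^n + \ell\bigr), \]
noting that the prefactor is strictly positive since $\delta>0$ is a hypothesis of Theorem~\ref{thm:sieve}, and $\Phi_q(k), \Phi_q(K), q^{D_1}$ are all positive. Therefore the positivity of $N_{h,H}$ is equivalent to the positivity of $q^n + \ell$.

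Next, by the triangle inequality, $q^n + \ell > 0$ whenever $q^n > |\ell|$. Substituting the explicit bound for $|\ell|$ supplied by Theorem~\ref{thm:sieve} reduces this to
\[ q^n > D_2 W(k) W(K)\left(\frac{u+v}{\delta} + 2\right) q^{\deg(gG) + n/2}, \]
and dividing both sides by $q^{\deg(gG) + n/2}$ gives exactly the inequality stated in the corollary.

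There is no substantive obstacle here: the real analytic content lies in Theorem~\ref{thm:sieve}, and the corollary amounts to little more than rearranging the estimate and invoking the triangle inequality. The only mild point of care is to record that each factor in the prefactor is positive, so that sign considerations cannot spoil the conclusion.
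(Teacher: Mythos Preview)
Your argument is correct and mirrors exactly the paper's (implicit) approach: the paper does not spell out a proof but simply notes that the corollary follows from Theorem~\ref{thm:sieve} in the same way that Corollary~\ref{cor:main} follows from Theorem~\ref{thm:main}. Your write-up supplies precisely that routine step of observing the prefactor is positive and then requiring $q^n>|\ell|$.
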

%
%
%
%
\section{Normal points on Artin-Schreier curves} \label{sec:ascurves}
Throughout this section, we consider $\F_p$ as the base field, i.e., we write $q=p^n$ and so $\F_q=\F_{p^n}$ is viewed as the $n$-degree extension of $\F_p$. In particular, we adopt the concepts and results from Sections~\ref{sec:preparation}, \ref{sec:freeness} and \ref{sec:freepolval} with $q=p$.

Recall that in \cite{cohenkapetanakisreis22}, the authors explored the existence of points $(x_0, y_0)\in \F_{q}\times \F_q$ on curves $y^n=f(x)$ such that $x_0$ and $y_0$ are primitive elements of $\F_q$. As earlier mentioned this was done through a generalized concept of freeness over the cyclic group $\F_q^*$, which is just the multiplicative analogue of what we developed in Sections~\ref{sec:preparation} and \ref{sec:freeness}. 
Here we explore the natural additive counterpart of this problem. Namely, we study the existence of $\F_q$-affine points on Artin-Schreier curves whose coordinates are normal over $\F_p$.
Recall that an \emph{Artin-Schreier curve} is a plane curve defined over $\overline{\F}_q$ by an affine equation $y^p-y=f(x)$, where $p$ is the characteristic of $\F_q$ and $f\in \F_q[x]$ is a polynomial not of the form $r(x)^p-r(x)$ for some $r\in \overline{\F}_q[x]$, or, equivalently, $f$ is nonsingular over every extension of $\F_q$. In fact, the definition includes rational functions $f\in \F_q(x)$ but we are going to consider only polynomials. We introduce the following definition.
 
\begin{definition}
Given an Artin-Schreier curve $\mathfrak A_f: y^p-y=f(x)$, an $\F_q$-rational point $(x_0, y_0)\in \F_q\times \F_q$ is an {\em $\F_q$-normal point} of $\mathfrak A_f$ if both $x_0$ and $y_0$ are normal over the prime field $\F_p$. 
\end{definition}

The following lemma relates the existence of normal points on Artin-Schreier curves to the existence of special pairs of elements in $\F_q$ with prescribed freeness. 

\begin{lemma}\label{lem:connect}
An Artin-Schreier curve $\mathfrak A_f: y^p-y=f(x)$ admits an $\F_q$-normal point if and only if there exists an element $z\in \F_q$ that is normal over $\F_p$ such that  
$f(z)$ has $\F_p$-order $\frac{x^n-1}{x-1}$. The latter is equivalent to the existence of an element $z\in \F_q$ such that $z$ is $(x^n-1, 1)$-free and $f(z)$ is $\left(\frac{x^n-1}{x-1}, x-1\right)$-free.  
\end{lemma}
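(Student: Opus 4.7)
The plan is to reduce the lemma to Lemma~\ref{lem:gcd} and Corollary~\ref{cor:ord}, exploiting the characteristic-$p$ identity $y^p-y=(x-1)\circ y$. The equivalence between the two right-hand formulations is immediate from Remark~\ref{remark:freeness}(ii): taking $f=1$ there identifies $(x^n-1,1)$-freeness with normality over $\F_p$, and taking $f=x-1$ identifies $\bigl(\tfrac{x^n-1}{x-1},x-1\bigr)$-freeness with having $\F_p$-order exactly $\tfrac{x^n-1}{x-1}$. So only the first equivalence requires real work.

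The forward direction is quick. Given an $\F_q$-normal point $(x_0,y_0)$, set $z:=x_0$; since $y_0$ is normal, $\oord(y_0)=x^n-1$, and the relation $f(z)=y_0^p-y_0=(x-1)\circ y_0$ combined with Lemma~\ref{lem:gcd} gives $\oord(f(z))=\tfrac{x^n-1}{\gcd(x^n-1,x-1)}=\tfrac{x^n-1}{x-1}$.

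For the reverse direction, which is the technical heart, I start with $z$ normal and $\oord(f(z))=\tfrac{x^n-1}{x-1}$ and must exhibit a normal $y_0\in\F_q$ with $(x-1)\circ y_0=f(z)$. Fix a normal $\beta\in\F_q$ over $\F_p$. By Corollary~\ref{cor:ord}, $f(z)=((x-1)g)\circ\beta$ for some $g\in\F_p[x]$ of degree less than $n-1$ with $\gcd\bigl(g,\tfrac{x^n-1}{x-1}\bigr)=1$. Any candidate $y_0=h\circ\beta$ with $(x-1)\circ y_0=f(z)$ then satisfies $h\equiv g\pmod{\tfrac{x^n-1}{x-1}}$, and hence $h=g+c\cdot\tfrac{x^n-1}{x-1}$ for a unique $c\in\F_p$. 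By Lemma~\ref{lem:gcd}, $y_0$ is normal iff $\gcd(h,x^n-1)=1$; the congruence modulo $\tfrac{x^n-1}{x-1}$ takes care of that factor automatically, so the whole question reduces to achieving $h(1)=g(1)+cn\ne 0$ in $\F_p$.

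I expect the main obstacle, and the only subtle point, to be this last arithmetic condition, which splits into two cases. When $p\nmid n$ the coefficient $n$ is invertible in $\F_p$, so at worst one value of $c$ fails and the remaining $p-1\ge 1$ choices produce a normal $y_0$. When $p\mid n$ the shift by $c$ disappears, but now $(x-1)^2\mid x^n-1$ forces $(x-1)\mid\tfrac{x^n-1}{x-1}$, and then the coprimality $\gcd\bigl(g,\tfrac{x^n-1}{x-1}\bigr)=1$ delivers $g(1)\ne 0$ for free, so every $c$ works. Either way a suitable $y_0$ exists, completing $(z,y_0)$ to an $\F_q$-normal point on $\mathfrak A_f$.
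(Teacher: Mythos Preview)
Your proof is correct and follows the same route as the paper's, reducing everything to Corollary~\ref{cor:ord}, Lemma~\ref{lem:gcd}, and the identity $y^p-y=(x-1)\circ y$. Your converse is in fact more carefully argued: the paper simply asserts, citing Corollary~\ref{cor:ord}, that $f(z)=(x-1)\circ(ay_1)$ for some normal $y_1$, whereas you explicitly produce such a normal preimage via the parameterization $h=g+c\cdot\tfrac{x^n-1}{x-1}$ and the case split on whether $p\mid n$.
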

\begin{proof}
The second statement follows directly by Remark~\ref{remark:freeness}. For the first statement, assume that $\mathfrak A_f$ admits an $\F_q$-normal point $(x_0, y_0)$. Hence $z=x_0$ is normal over $\F_p$ and $f(z)=f(x_0)=y_0^p-y_0$. Since $y_0$ is also normal over $\F_p$, Corollary~\ref{cor:ord} implies that $f(z)=(x-1)\circ y_0$ has $\F_p$-order $\frac{x^n-1}{x-1}$.
Conversely, suppose that there exists an element $z\in \F_q$ that is normal over $\F_p$ such that $f(z)$ has $\F_p$-order $\frac{x^n-1}{x-1}$. From Corollary~\ref{cor:ord}, we have that 
$f(z)=(x-1)\circ ay_1$, where $a\in \F_p^*$ and $y_1$ is normal over $\F_p$. It is clear that $ay_1$ is also normal over $\F_p$. In particular, the point $(x_0, y_0)=(z, ay_1)$ has normal coordinates and belongs to the curve $\mathfrak A_f$, i.e., the curve $\mathfrak A_f$ admits an $\F_q$-normal point. 
\end{proof}

We obtain the following result.

\begin{corollary}\label{cor:ineq}
Assume the notation of Theorem~\ref{thm:main}. An Artin-Schreier curve $\mathfrak A_f: y^p-y=f(x)$ admits an $\F_q$-normal point whenever $$N_{x, f}\left(x^n-1, 1, \frac{x^n-1}{x-1}, x-1\right)>0.$$ In particular, the latter holds if the pair $(x, f(x))$ is not singular and
$$p^{\frac{n}{2}-1}\ge (\deg f-1)W(x^n-1)W\left(\frac{x^n-1}{x-1}\right),$$
where for $g\in \F_p[x]$, $W(g)$ denotes the number of distinct monic squarefree divisors of $g$, defined over $\F_p$.
\end{corollary}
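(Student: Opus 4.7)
My plan is to combine Lemma~\ref{lem:connect} with Corollary~\ref{cor:main}, applied to a specific instance of the count $N_{h,H}$. The first assertion is immediate from Lemma~\ref{lem:connect}: the curve $\mathfrak A_f$ admits an $\F_q$-normal point if and only if there is some $z\in\F_q$ that is $(x^n-1,1)$-free and such that $f(z)$ is $\bigl(\tfrac{x^n-1}{x-1},x-1\bigr)$-free, and the number of such $z$ is precisely $N_{x,f}\bigl(x^n-1,1,\tfrac{x^n-1}{x-1},x-1\bigr)$, with $h(x)=x$ and $H(x)=f(x)$ in the notation of Section~\ref{sec:freepolval}.

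To extract the concrete inequality, I would invoke Corollary~\ref{cor:main} with base field $\F_p$ (so $q$ in Section~\ref{sec:freepolval} is replaced by $p$ and $\F_{q^n}$ there is replaced by $\F_q=\F_{p^n}$ here), substituting $f=x^n-1$, $g=1$, $F=\tfrac{x^n-1}{x-1}$, $G=x-1$, $h(x)=x$, and $H(x)=f(x)$. The divisibility hypotheses are trivially met: $f$ and $F$ divide $x^n-1$, while $g=1$ divides $\tfrac{x^n-1}{f}=1$ and $G=x-1$ divides $\tfrac{x^n-1}{F}=x-1$; the nonsingularity assumption on the pair $(x,f(x))$ appears as an explicit extra hypothesis. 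One then reads off the numerical parameters: $\deg(gG)=1$, $W(f)=W(x^n-1)$, $W(F)=W\bigl(\tfrac{x^n-1}{x-1}\bigr)$, and since $\deg f\ge 2$ the maximal degree of $ax+bf(x)$ over $(a,b)\ne (0,0)$ is attained for any $b\ne 0$ and equals $\deg f$, giving $D_2=\deg f-1$. Plugging these values into the condition $p^{n/2-\deg(gG)}>D_2\,W(f)W(F)$ of Corollary~\ref{cor:main} yields precisely the stated bound.

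The argument is essentially pure bookkeeping, with no substantive obstacle beyond the matching of parameters; all of the real work has already been carried out in Lemma~\ref{lem:connect}, Theorem~\ref{thm:main}, and Corollary~\ref{cor:main}. The only point requiring a modicum of care is that the nonsingularity of the pair $(x,f(x))$—imposed in order to be able to apply the Weil-type bound of Theorem~\ref{thm:weil} inside the proof of Theorem~\ref{thm:main}—must be transported verbatim into the statement, which is why it appears as an explicit hypothesis rather than being absorbed into the degree inequality.
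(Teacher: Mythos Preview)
Your proposal is correct and follows exactly the same route as the paper's own proof: Lemma~\ref{lem:connect} for the first assertion and Corollary~\ref{cor:main} (hence Theorem~\ref{thm:main}) with the specific parameter choice $h(x)=x$, $H(x)=f(x)$, $g=1$, $G=x-1$ for the second. Your write-up is in fact more detailed than the paper's; the only implicit step is that $\deg f\ge 2$ follows from the nonsingularity of the pair $(x,f(x))$, since a linear or constant $f$ makes $ax+bf(x)$ constant for a suitable $(a,b)\ne(0,0)$.
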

\begin{proof}
The first statement follows directly by Lemma~\ref{lem:connect}. For the second statement, observe that our assumption on the pair $(x, f(x))$ allows us to employ Theorem~\ref{thm:main} and then the result follows 
from Corollary~\ref{cor:main}.
\end{proof}
In Example~\ref{ex:main} we comment that, if $f(x)$ is linear, then the pair $(x, f(x))$ is singular. In fact, for $f(x)=cx+d$ with $c\in \F_{q}^*$, we have that $ax+bf(x)=r(x)^p-r(x)-d$ for $(a, b)=(c, -1)\ne (0, 0)$ and $r(x)=0$. Motivated by Corollary~\ref{cor:ineq}, in the following proposition we show that we can actually obtain some existence results when $f(x)$ has degree one, without going through the character sum method.

\begin{proposition}\label{prop:nonsingular}
Let $f(x)=ax+b\in \F_p[x]$, where $a\ne 0$ and $q=p^n$. If $b=0$ or $n\equiv 0\pmod p$, then the Artin-Schreier curve $\mathfrak A_f: y^p-y=f(x)$ does not admit an $\F_q$-normal point. On the other hand, if $n\not\equiv 0\pmod p$, for every element $\alpha\in \F_q$ that is normal over $\F_p$, there exists a linear polynomial $F(x)=x+b\in \F_p[x]$ with $b\ne 0$ such that its associated Artin-Schreier curve $\mathfrak A_F: y^p-y=F(x)$ contains an $\F_q$-normal point of the form $(\alpha, y_0)$.  
\end{proposition}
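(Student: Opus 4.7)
The plan rests on three facts about the $\F_p[x]$-module structure of $\F_q$: (i) for $c\in\F_p^*$ and $\beta\in\overline{\F}_p$, one has $h\circ(c\beta)=c(h\circ\beta)$ for all $h\in\F_p[x]$, so $\oord(c\beta)=\oord(\beta)$; (ii) every $\alpha\in\F_q$ normal over $\F_p$ satisfies $\Tr_{\F_q/\F_p}(\alpha)\neq 0$, since $\Tr_{\F_q/\F_p}(\alpha)=\tfrac{x^n-1}{x-1}\circ\alpha$ and the order $\oord(\alpha)=x^n-1$ cannot divide the lower-degree polynomial $\tfrac{x^n-1}{x-1}$; (iii) the additive Hilbert~90, which says $y^p-y=c$ is solvable in $\F_q$ iff $\Tr_{\F_q/\F_p}(c)=0$.

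For the negative part I would argue as follows. Suppose $(x_0,y_0)\in\F_q\times\F_q$ is an $\F_q$-normal point of $\mathfrak A_f$. Lemma~\ref{lem:connect} forces $\oord(ax_0+b)=\tfrac{x^n-1}{x-1}$. If $b=0$, then by (i), $\oord(ax_0)=\oord(x_0)=x^n-1$, which differs from $\tfrac{x^n-1}{x-1}$ --- contradiction. If instead $p\mid n$, then by (iii) we would need $\Tr(ax_0+b)=a\,\Tr(x_0)+nb=a\,\Tr(x_0)$ to vanish, contradicting (ii). Either case rules out a normal point.

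For the positive part, given a normal $\alpha\in\F_q$ with $n\not\equiv 0\pmod p$, I would set $b:=-\Tr(\alpha)/n\in \F_p$; this is well defined since $n$ is invertible in $\F_p$, and is nonzero by (ii). With $F(x)=x+b$, Lemma~\ref{lem:connect} reduces the goal to showing $\oord(\alpha+b)=\tfrac{x^n-1}{x-1}$. The choice of $b$ gives $\Tr(\alpha+b)=0$, so this order divides $\tfrac{x^n-1}{x-1}$. For the reverse direction, I would let $d$ be a monic divisor of $\tfrac{x^n-1}{x-1}$ with $d\circ(\alpha+b)=0$; since $b\in\F_p$ one computes $d\circ b=b\,d(1)$, so $d\circ\alpha=-b\,d(1)\in\F_p$. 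Applying $x-1$ then yields $\bigl((x-1)d\bigr)\circ\alpha=0$, i.e.\ $x^n-1\mid (x-1)d$ in $\F_p[x]$; cancelling $x-1$ in the integral domain $\F_p[x]$ gives $\tfrac{x^n-1}{x-1}\mid d$, forcing $d=\tfrac{x^n-1}{x-1}$.

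The main obstacle --- or rather the conceptual point the argument hinges on --- is the role of the hypothesis $p\nmid n$: it is what allows $b=-\Tr(\alpha)/n$ to be defined in $\F_p$ and makes it nonzero via (ii). If $p\mid n$, then $\Tr(\alpha+c)=\Tr(\alpha)\neq 0$ for every $c\in\F_p$, so no shift of $\alpha$ could possibly yield an element whose $\F_p$-order divides $\tfrac{x^n-1}{x-1}$, consistent with the negative direction. Beyond this conceptual crux, the remaining bookkeeping reduces to routine divisibility arguments in $\F_p[x]$.
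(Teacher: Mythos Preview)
Your argument is correct and follows essentially the same route as the paper. Both choose $b=-\Tr(\alpha)/n$ and reduce the positive direction to showing $\oord(F(\alpha))=\tfrac{x^n-1}{x-1}$ via the divisibility $x^n-1\mid (x-1)d$; the only notable difference is that you invoke Lemma~\ref{lem:connect} to obtain the normal point $(\alpha,y_0)$ directly, whereas the paper constructs $y_0$ by hand (Hilbert~90 plus a shift in $\F_p$), effectively re-deriving part of that lemma.
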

\begin{proof}
Suppose, by contradiction that there exists an $\F_q$-normal point $(\alpha, \beta)$ of $\mathfrak A_f$. In particular, for the trace polynomial $T_n(x):=\sum_{i=0}^{n-1}x^{p^{i-1}}$, we have that 
$$0=\beta^q-\beta=T_n(\beta^p-\beta)=T_n(f(\alpha))=T_n(a\alpha+b)=aT_n(\alpha)+nb=aT_n(\alpha),$$
where in the last equality we used the fact that $b=0$ or $n\equiv 0\pmod p$. Since $a\ne 0$, we obtain that $T_n(\alpha)=0$. We observe that $T_n(\alpha)=\frac{x^n-1}{x-1}\circ \alpha$ and this is a contradiction with the assumption that $\alpha$ is normal over $\F_p$.

Now let $\alpha\in \F_q$ be normal over $\F_p$. In particular, $\delta:=T_n(\alpha)=\frac{x^n-1}{x-1}\circ \alpha\ne 0$. Since $n\not\equiv 0\pmod p$, there exists $\Delta\in \F_p^*$ such that $n\Delta=1\in \F_p$. In this case, for $F(x)=x-\delta\cdot \Delta$, we have that 
$$T_n(F(\alpha))=T_n(\alpha-\delta\cdot \Delta)=T_n(\alpha)-\delta\cdot \Delta\cdot n=\delta-\delta=0.$$
From Theorem~3.78 and Corollary~3.79 in \cite{lidlniederreiter97}, there exists $\beta\in \F_q$ such that $F(\alpha)=\beta^p-\beta$.
 In particular, $(\alpha, \beta+t)$ is an $\F_q$-rational point of $\mathfrak A_F$ for arbitrary $t\in \F_p$. It remains to prove
that there exists some $t_0\in \F_p$ such that $\beta+t_0$ is normal over $\F_p$, hence producing the $\F_p$-normal point $(\alpha, \beta+t_0)$.

We first prove that $\oord(F(\alpha))=\frac{x^n-1}{x-1}$. As $\frac{x^n-1}{x-1}\circ \alpha=T_n(F(\alpha))=0$, it follows that $\oord(F(\alpha))$ divides $\frac{x^n-1}{x-1}$. On the other hand, since $\oord(F(\alpha))\circ F(\alpha)=0$, we have that \begin{align*}0 =((x-1)\cdot\oord(F(\alpha)))\circ F(\alpha)  &=\oord(F(\alpha))\circ (F(\alpha)^p-F(\alpha)) \\ &=\oord(F(\alpha))\circ (\alpha^p-\alpha) \\ &=((x-1)\cdot \oord(F(\alpha)))\circ \alpha.\end{align*}
Hence $\oord(\alpha)=x^n-1$ divides $(x-1)\oord(F(\alpha))$. Therefore,  $\oord(F(\alpha))$ is divisible by $\frac{x^n-1}{x-1}$ and so $\oord(F(\alpha))=\frac{x^n-1}{x-1}$. Now, since $F(\alpha)=(x-1)\circ (\beta+t)$ for every $t\in \F_p$, it follows by Lemma~\ref{lem:gcd} that $\oord(\beta+t)=x^n-1$ or $\frac{x^n-1}{x-1}$. In particular, $\oord(\beta+t)=x^n-1$ if and only if $T_n(\beta+t)=\frac{x^n-1}{x-1}\circ (\beta+t)\ne 0$. Since $n\Delta=1\in \F_p$, it follows that $T_n(\beta+t_0)=T_n(\beta)+nt_0\ne 0$ for every $t_0\in \F_p$ with $t_0\ne  -\Delta\cdot T_n(\beta)$. In particular, for any such $t_0$, we have $\oord(\beta+t_0)=x^n-1$ and so $\beta+t_0$ is normal over $\F_p$.
\end{proof}

\subsection{Proof of Theorem~\ref{thm:as}}
We observe that if  $1<\deg(f)\le p+1$, then the pair $(x, f(x))$ is not singular whenever $f(x)$ is not of the form $ax^p+bx+c$ with $a, b, c\in \F_q$. Moreover, assuming the latter, from our previous discussion we conclude that the polynomial $y^p-y-f(x)$ gives rise to an Artin-Schreier curve. From Corollary~\ref{cor:ineq} and the inequality $\deg(f)-1\le p$, it suffices to verify the following inequality
\begin{equation}\label{eq:main}p^{\frac{n}{2}-2}\ge W(x^n-1)W\left(\frac{x^n-1}{x-1}\right).\end{equation}

We start with the following technical lemma: for its proof, see Lemma 3.7 in~\cite{jv}.

\begin{lemma}
Let $p$ be a prime and let $n\ge 2$ be a positive integer. Then $W(x^n-1)\le 2^{\frac{n+a}{b}}$, where $(a, b)=(14, 5), (20, 4)$ and $(18, 3)$ for $p=2, 3$ and $p=5$, respectively. Moreover, for $7\le p\le 23$ and $p\ge 29$ we can take $(a, b)=(p-1, 2)$ and $(a, b)=(0, 1)$, respectively.  
\end{lemma}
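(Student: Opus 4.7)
My approach is via the cyclotomic factorization of $x^n-1$ over $\F_p$. First I would write $n=p^a m$ with $\gcd(m,p)=1$, so that $x^n-1=(x^m-1)^{p^a}$ in $\F_p[x]$ and $W(x^n-1)=W(x^m-1)$; since $m\le n$, it suffices to bound the right-hand side by $2^{(m+a)/b}$. Using $x^m-1=\prod_{d\mid m}\Phi_d(x)$ and the classical fact that each cyclotomic polynomial $\Phi_d$ splits over $\F_p$ into $\phi(d)/\ord_d(p)$ irreducible factors of degree $\ord_d(p)$, the number of distinct monic irreducible factors of $x^m-1$ over $\F_p$ equals
\[ r_p(m):=\sum_{d\mid m}\frac{\phi(d)}{\ord_d(p)}, \]
so $W(x^m-1)=2^{r_p(m)}$, and the lemma reduces to showing $r_p(m)\le (m+a)/b$.

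For $p\ge 29$ the trivial estimate $r_p(m)\le \sum_{d\mid m}\phi(d)=m$ immediately gives $(a,b)=(0,1)$. For $7\le p\le 23$, I would use that $\ord_d(p)=1$ forces $d\mid p-1$, while all other $d\mid m$ satisfy $\ord_d(p)\ge 2$. Splitting the sum accordingly yields
\[ r_p(m)\le \gcd(m,p-1)+\tfrac{1}{2}\bigl(m-\gcd(m,p-1)\bigr)=\frac{m+\gcd(m,p-1)}{2}\le \frac{m+p-1}{2}, \]
the desired bound with $(a,b)=(p-1,2)$.

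For $p\in\{2,3,5\}$ the same strategy must be iterated one step further: I would enumerate exhaustively the $d$ with $\ord_d(p)<b$ and apply $\ord_d(p)\ge b$ to the rest. For $p=2$, the divisors $d$ with $\ord_d(2)\le 4$ are exactly $1,3,5,7,15$, with $\phi(d)=1,2,4,6,8$ and $\ord_d(2)=1,2,4,3,4$, contributing a total of $1+1+1+2+2=7$ to $r_2(m)$ in the extremal case when all five divide $m$ (i.e.\ $105\mid m$), with total $\phi$-weight $21$; bounding the remaining terms by $\phi(d)/5$ and using $\sum_{d\mid m}\phi(d)=m$ gives $r_2(m)\le 7+(m-21)/5=(m+14)/5$, i.e.\ $(a,b)=(14,5)$. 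The analogous computation for $p=3$ takes $d\in\{1,2,4,8,13,26\}$ (orders $\le 3$, total contribution $13$, $\phi$-weight $32$) and produces $(a,b)=(20,4)$, while for $p=5$ one takes $d\in\{1,2,3,4,6,8,12,24\}$ (orders $\le 2$, total contribution $14$, $\phi$-weight $24$) and obtains $(a,b)=(18,3)$.

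\textbf{Main obstacle.} The conceptual skeleton (cyclotomic decomposition plus the bound on $\ord_d(p)$) is straightforward; the main labour lies in verifying, for each of $p=2,3,5$, that the finite list of exceptional small-order divisors is truly complete and identifying the extremal $m$ that achieves the worst constant. One must also balance the trade-off between how many exceptions to enumerate (which inflates $a$) and the resulting improvement in $b$, so the specific pairs $(14,5)$, $(20,4)$, $(18,3)$ presumably arise from optimizing this trade-off.
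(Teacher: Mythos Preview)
The paper does not actually prove this lemma: it simply cites Lemma~3.7 of \cite{jv} (Aguirre--Neumann). Your argument is the standard one and is correct; the key inequality $bC-S\le a$ (where $C$ is the total contribution and $S$ the total $\phi$-weight of the small-order divisors) is exactly what you verify case by case, and since each summand $\phi(d)\bigl(b/\ord_d(p)-1\bigr)$ is nonnegative, the maximum over all $m$ is attained when every small-order $d$ divides $m$, as you claim. One cosmetic point: you use the symbol $a$ both for the $p$-adic valuation of $n$ in the opening sentence and for the constant in the bound, which should be renamed to avoid confusion.
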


Upon combining the previous lemma with the trivial bound \[ W\left(\frac{x^n-1}{x-1}\right)\le \min \{ W(x^n-1),2^{n-1}\}, \] we obtain that Ineq.~\eqref{eq:main} holds whether $n>4$ and $p>290000$, while for smaller values of $p$ we also restrict, in Table~\ref{tab:exc-1}, the possible pairs $(n,p)$ where Ineq.~\eqref{eq:main} might not hold.

\begin{table}[h]
\begin{center}\scriptsize
\begin{tabular}{|c|c|}
\hline $p$ & $n$\\  \hline
2 &  $\le 75$\\ \hline
3 &    $\le 45$\\ \hline
5 & $\le 33$\\ \hline
7 &  $\le 28$\\ \hline
11, 13, 17, 19, 23 &  $\le 24$ \\ \hline
$\le 29$ & $\le 20$\\ \hline
$\le 100$ & $\le 9$\\ \hline
$\le 200 $ & $\le 7$\\ \hline
$\le 500$ & $\le 6$\\ \hline
$\le 2100$ & $\le 5$\\ \hline
\end{tabular}
\end{center}
\caption{Pairs $(n,p)$ that may not satisfy Ineq.~\eqref{eq:main}.}
\label{tab:exc-1}
\end{table}

Then, we consider the (finite) set of pairs $(n,p)$ included in Table~\ref{tab:exc-1} and directly verify Ineq.~\eqref{eq:main}. In other words, we explicitly compute the value of $W(x^n-1)$ and $W(\frac{x^n-1}{x-1})$. For $n=5$, there are 5779 primes that fail to satisfy Ineq.~\eqref{eq:main}, while the explicit list of exceptional pairs $(n,p)$, with $n\geq 6$, is presented in Table~\ref{tab:exc-2}.

\begin{table}[h!]
\begin{center}\scriptsize
\begin{tabular}{|c|p{0.8\linewidth}|c|}
\hline $n$ & $p$ & $\#$\\  \hline\hline
 6 & 2,  3,  5,  7,  11,  13,  17,  19,  23,  29,  31,  37,  41,  43,  47,  53,  59,  61,  67,  71,  73, 79,  83,  89,  97,  101,  103,  107,  109,  113,  127,  139,  151,  157,  163,  181,  193,  199,  211, 223,  229,  241,  271,  277,  283,  307,  313,  331,  337,  349,  367,  373,  379,  397,  409,  421,  433, 439,  457,  463,  487,  499,  523,  541,  547,  571,  577,  601,  607,  613,  619,  631,  643,  661,  673, 691,  709,  727,  733,  739,  751,  757,  769,  787,  811,  823,  829,  853,  859,  877,  883,  907,  919, 937,  967,  991,  997,  1009,  1021,  1033,  1039,  1051,  1063,  1069,  1087,  1093,  1117,  1123,  1129, 1153,  1171,  1201,  1213,  1231,  1237,  1249,  1279,  1291,  1297,  1303,  1321,  1327,  1381,  1399, 1423,  1429,  1447,  1453,  1459,  1471,  1483,  1489,  1531,  1543,  1549,  1567,  1579,  1597,  1609, 1621,  1627,  1657,  1663,  1669,  1693,  1699,  1723,  1741,  1747,  1753,  1759,  1777,  1783,  1789, 1801,  1831,  1861,  1867,  1873,  1879,  1933,  1951,  1987,  1993,  1999,  2011,  2017,  2029 & 168 \\ \hline
7 & 2, 3, 13, 29, 43, 71, 113, 127, 197, 211, 239, 281, 337, 379 & 14 \\ \hline
8 & 3, 5, 7, 11, 13, 17, 19, 29, 37, 41, 73, 89, 97, 113, 137 & 15 \\ \hline
9 &2, 7, 19, 37, 73 & 5 \\ \hline
10 & 2, 3, 11, 31, 41, 61, 71 & 7 \\ \hline
11 & 23 & 1 \\ \hline
12 & 5, 7, 13, 19 & 4 \\ \hline
13 & 3 & 1 \\ \hline
14 & 2, 29 & 2 \\ \hline
15 & 2 & 1 \\ \hline
16 & 3, 5, 7, 17 & 4 \\ \hline
18 & 19 & 1 \\ \hline
20 & 3, 11 & 2 \\ \hline
21 & 2 & 1 \\ \hline
22 & 23 & 1 \\ \hline
24 & 5, 7 & 2 \\ \hline
26 & 3 & 1 \\ \hline
\multicolumn{2}{|r|}{\textbf{Total:}} & 230 \\ \hline
\end{tabular}
\end{center}
\caption{Pairs $(n,p)$ that do not satisfy Ineq.~\eqref{eq:main}, where $n\ge 6$.}
\label{tab:exc-2}
\end{table}

Finally, we turn our attention to the potential of ruling out most of the exceptional pairs using the sieve, as described in Theorem~\ref{thm:sieve} and Corollary~\ref{cor:sieve}. More precisely, the aforementioned results imply that the condition of Corollary~\ref{cor:ineq} may be improved as
\begin{equation}\label{eq:sieve}
p^{\frac n2 - 1} \geq (\deg f-1) W(k_1)W(k_2) \left( \frac{2u}{\delta} + 2 \right) ,
\end{equation}
where (the squarefree part) of $(x^n-1)/(x-1)$ is $k_1p_1\cdots p_u$ and the (squarefree part) of $x^n-1$ is $k_2 r_1\cdots r_v$ for some irreducible polynomials $p_1,\ldots ,p_u,r_1,\ldots ,r_v$, such that 
\[  \delta := 1 - \sum_{i=1}^u 1/|p_i| - \sum_{j=1}^v 1/|r_j| \]
is positive. In particular, in our test, for each pair $(n,p)$, we choose the polynomials $p_1,\ldots ,p_u,r_1,\ldots ,r_v$ in such a way that $u$ and $v$ are maximum and $\delta$ remains positive and check whether Ineq.~\eqref{eq:sieve} holds. A quick computer test reveals that among the aforementioned 6009 pairs $(n,p)$ that did not satisfy Ineq.~\eqref{eq:main}, just 5 prove to be persistent enough to fail this test as well. These pairs $(n,p)$ are $(5,2), (5,5), (6,2), (6,3)$ and $(6,7)$. 
%
This concludes the proof of Theorem~\ref{thm:as}.


\section{Acknowledgments}

We are grateful to the anonymous reviewer for their efforts in reviewing our manuscript and their suggestions and improvements.

\section{Disclosure statement}

The authors do not work for, consult, own shares in or receive funding from any
company or organization that would benefit from this article, and have disclosed
no relevant affiliations beyond their academic appointment.

\end{document}